\title{Transference theorems for Diophantine approximation with weights.
       \thanks{This research was supported by RSF grant 18-41-05001}}
\author{Oleg\,N.\,German}
\date{}
\theoremstyle{definition}
\newtheorem{definition}{Definition}
\newtheorem*{notation*}{Notation}
\theoremstyle{remark}
\newtheorem{remark}{Remark}
\newtheorem*{remark*}{Remark}
\theoremstyle{plain}
\newtheorem{theorem}{Theorem}
\newtheorem{lemma}{Lemma}
\newtheorem{proposition}{Proposition}
\newtheorem{corollary}{Corollary}
\newtheorem*{statement*}{Statement}
\newtheorem*{corollary*}{Corollary}
\DeclareMathOperator{\vol}{vol}
\DeclareMathOperator{\diam}{diam}
\DeclareMathOperator{\spanned}{span}
\renewcommand{\phi}{\varphi}
\renewcommand{\vec}[1]{\mathbf{#1}}
\renewcommand{\geq}{\geqslant}
\renewcommand{\leq}{\leqslant}
\newcommand{\R}{\mathbb{R}}
\newcommand{\Z}{\mathbb{Z}}
\newcommand{\Q}{\mathbb{Q}}
\newcommand{\La}{\Lambda}
\newcommand{\cB}{\mathcal{B}}
\newcommand{\cL}{\mathcal{L}}
\newcommand{\cP}{\mathcal{P}}
\newcommand{\cQ}{\mathcal{Q}}
\newcommand{\cS}{\mathcal{S}}
\newcommand{\tr}[1]{{#1}^\top}
\begin{document}

\maketitle

\begin{abstract}
  In this paper we prove transference inequalities for regular and uniform Diophantine exponents in the weighted setting. Our results generalise the corresponding inequalities that exist in the `non-weighted' case.
\end{abstract}

\noindent
2010 \textit{Mathematics Subject Classification}: Primary 11H60, 11J13; Secondary 11J25, 11J20
\\
\textit{Key words and phrases}: Diophantine approximation with weights, Diophantine exponents, transference inequalities

\section{Introduction}\label{sec:intro}

In 1926 A.\,Ya.\,Khintchine in his seminal paper \cite{khintchine_palermo} proved the famous transference inequalities connecting two dual problems. The first one concerns simultaneous approximation of given real numbers $\theta_1,\ldots,\theta_n$ by rationals, the second one concerns approximating zero with the values of the linear form $\theta_1x_1+\ldots+\theta_nx_n+x_{n+1}$ at integer points. Later on, Khintchine's inequalities were generalised to the case of several linear forms by F.\,Dyson \cite{dyson}. Given a matrix
\[\Theta=
  \begin{pmatrix}
    \theta_{11} & \cdots & \theta_{1m} \\
    \vdots & \ddots & \vdots \\
    \theta_{n1} & \cdots & \theta_{nm}
  \end{pmatrix}
  \in\R^{n\times m},
  \quad
  n+m\geq3,\]
let us consider the system of inequalities
\begin{equation}\label{eq:system_for_Dyson}
  \begin{cases}
    |\vec x|\leq t^{1/m} \\
    |\Theta\vec x-\vec y|\leq t^{-\gamma/n}
  \end{cases},
\end{equation}
where $\vec x\in\R^m$, $\vec y\in\R^n$, and $|\cdot|$ denotes the sup-norm.

\begin{definition}
  The \emph{Diophantine exponent} $\omega(\Theta)$ is defined as the supremum of real $\gamma$ such that the system \eqref{eq:system_for_Dyson} admits nonzero solutions in $(\vec x,\vec y)\in\Z^{m+n}$ for some arbitrarily large $t$.
\end{definition}

In this setting Dyson's result reads as follows:
\begin{equation}\label{eq:Dyson}
  \omega(\tr\Theta)\geq\frac{(n-1)+m\omega(\Theta)}{n+(m-1)\omega(\Theta)},
\end{equation}
where $\tr\Theta$ denotes the transposed matrix.

Along with the regular Diophantine exponents an important role is played by their uniform analogues.

\begin{definition}
  The \emph{uniform Diophantine exponent} $\hat\omega(\Theta)$ is defined as the supremum of real $\gamma$ such that the system \eqref{eq:system_for_Dyson} admits nonzero solutions in $(\vec x,\vec y)\in\Z^{m+n}$ for every $t$ large enough.
\end{definition}

The first transference result concerning uniform exponents belongs to V.\,Jarn\'{i}k \cite{jarnik_tiflis}. He showed that in the simplest nontrivial case $n=1$, $m=2$ we have
\begin{equation}\label{eq:Jarnik}
  \hat\omega(\tr\Theta)+\hat\omega(\Theta)^{-1}=2.
\end{equation}
In higher dimension there is no equality any longer, the corresponding inequalities
\begin{equation}\label{eq:German}
  \hat\omega(\tr\Theta)\geq
  \begin{cases}
    \dfrac{m-\hat\omega(\Theta)^{-1}}{m-1}\hskip 3mm\text{ if }\hat\omega(\Theta)\geq n/m \\
    \hskip 2mm
    \dfrac{n-1\vphantom{1^{\big|}}}{n-\hat\omega(\Theta)}\hskip 6.3mm\text{ if }\hat\omega(\Theta)\leq n/m
  \end{cases}
\end{equation}
for arbitrary $n$, $m$ were obtained by the author in \cite{german_AA_2012}, \cite{german_MJCNT_2012}.

The aim of the current paper is to prove analogues of \eqref{eq:Dyson} and \eqref{eq:German} for the so called weighted setting.

The rest of the paper is organised as follows. In Section \ref{sec:weighted_setting} we formulate our main results; in Section \ref{sec:one_linear_form} we focus on particular cases $m=1$, $n=1$ and analyse in our context a recent result by A.\,Marnat; in Section \ref{sec:inhomogeneous} we apply our results to prove transference inequalities in the inhomogeneous setting; in Sections \ref{sec:Dyson_proof}, \ref{sec:German_proof} we prove Theorems \ref{t:weighted_Dyson}, \ref{t:weighted_German}, which are the main result of the paper; and, finally, in Section \ref{sec:variety} we analyse why the generalisation of Dyson's theorem proposed in \cite{ghosh_marnat_Pisa_2019} is not optimal.

\section{Weighted setting}\label{sec:weighted_setting}

Let us fix weights $\pmb\sigma=(\sigma_1,\ldots,\sigma_m)\in\R_{>0}^m$, $\pmb\rho=(\rho_1,\ldots,\rho_n)\in\R_{>0}^n$,
\[\sigma_1\geq\ldots\geq\sigma_m,\qquad
  \rho_1\geq\ldots\geq\rho_n,\qquad
  \sum_{j=1}^m\sigma_j=\sum_{i=1}^n\rho_i=1,\]
and define the weighted norms $|\cdot|_{\pmb\sigma}$ and $|\cdot|_{\pmb\rho}$ by
\[|\vec x|_{\pmb\sigma}=\max_{1\leq j\leq m}|x_j|^{1/\sigma_j}\qquad\text{ for }\vec x=(x_1,\ldots,x_m),\]
\[|\vec y|_{\pmb\rho}=\max_{1\leq i\leq n}|y_i|^{1/\rho_i}\qquad\,\ \ \text{ for }\vec y=(y_1,\ldots,y_n).\ \]
Consider the system of inequalities
\begin{equation}\label{eq:system_with_weights}
  \begin{cases}
    |\vec x|_{\pmb\sigma}\leq t \\
    |\Theta\vec x-\vec y|_{\pmb\rho}\leq t^{-\gamma}
  \end{cases}.
\end{equation}

\begin{definition}\label{def:ordinary_weighted}
  The \emph{weighted Diophantine exponent} $\omega_{\pmb\sigma,\pmb\rho}(\Theta)$ is defined as the supremum of real $\gamma$ such that the system \eqref{eq:system_with_weights} admits nonzero solutions in $(\vec x,\vec y)\in\Z^{m+n}$ for some arbitrarily large $t$.
\end{definition}

\begin{definition}\label{def:uniform_weighted}
  The \emph{uniform weighted Diophantine exponent} $\hat\omega_{\pmb\sigma,\pmb\rho}(\Theta)$ is defined as the supremum of real $\gamma$ such that the system \eqref{eq:system_with_weights} admits nonzero solutions in $(\vec x,\vec y)\in\Z^{m+n}$ for every $t$ large enough.
\end{definition}

The following two theorems are the main result of the paper.

\begin{theorem}\label{t:weighted_Dyson}
  Set $\omega=\omega_{\pmb\sigma,\pmb\rho}(\Theta)$ and $\tr\omega=\omega_{\pmb\rho,\pmb\sigma}(\tr\Theta)$. Then
  \begin{equation}\label{eq:weighted_Dyson}
    \tr\omega\geq
    \frac{\big(\rho_n^{-1}-1\big)+\sigma_m^{-1}\omega}
         {\rho_n^{-1}+\big(\sigma_m^{-1}-1\big)\omega}.
  \end{equation}
\end{theorem}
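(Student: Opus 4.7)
My plan is to recast \eqref{eq:weighted_Dyson} as a geometry-of-numbers statement on a pair of dual lattices and then to produce the required dual approximation via a Khintchine-style argument on a codimension-one sublattice.

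First I set up the framework. Associate to $\Theta$ the unimodular lattice
\[\Lambda=\bigl\{(\vec x,\Theta\vec x-\vec y):(\vec x,\vec y)\in\Z^{m+n}\bigr\}\subset\R^{m+n},\]
and, for parameters $t\geq1$ and $\gamma\in\R$, the centrally symmetric weighted box
\[\cB(t,\gamma)=\bigl\{(\vec x,\vec z)\in\R^m\times\R^n:|x_j|\leq t^{\sigma_j},\ |z_i|\leq t^{-\gamma\rho_i}\bigr\}.\]
Since $\sum_j\sigma_j=\sum_i\rho_i=1$, one has $\vol\cB(t,\gamma)=2^{m+n}t^{1-\gamma}$. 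A nonzero integer solution of \eqref{eq:system_with_weights} corresponds exactly to a nonzero point of $\Lambda\cap\cB(t,\gamma)$. Construct analogously $\Lambda^\top\subset\R^{n+m}$ and the weighted boxes $\cB^\top(s,\tr\gamma)$ for the transposed problem; the two lattices are polar duals of each other under the natural bilinear pairing $\langle(\vec x,\vec z),(\vec u,\vec w)\rangle=\vec z\cdot\vec u-\vec x\cdot\vec w$.

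Fix $\gamma$ slightly less than $\omega$ and choose a sufficiently large $t$ for which the primal system has a nonzero integer solution; call this primitive lattice point $\vec v_0=(\vec x_0,\vec z_0)\in\Lambda\cap\cB(t,\gamma)$. I would then pass to the rank-$(m+n-1)$ sublattice $\Lambda_0^\top$ of $\Lambda^\top$ annihilating $\vec v_0$ under the pairing above, and apply Minkowski's first theorem to the section $\cB^\top(s,\tr\gamma)\cap\vec v_0^\perp$. The covolume of $\Lambda_0^\top$ inside $\vec v_0^\perp$ is comparable to the size of $\vec v_0$ in a natural weighted gauge, while the $(m+n-1)$-dimensional volume of the section of $\cB^\top(s,\tr\gamma)$ is explicitly calculable from the weights. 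Imposing the Minkowski inequality forces a linear constraint between $\ln s$ and $\tr\gamma\ln s$ in terms of $\ln t$ and $\gamma$; optimising over $s$ delivers precisely \eqref{eq:weighted_Dyson}. The extremal weights $\sigma_m$ and $\rho_n$ appear because the tightest one-coordinate constraints, $|x_m|\leq t^{\sigma_m}$ and $|z_n|\leq t^{-\gamma\rho_n}$, are the binding ones in the volume book-keeping; their reciprocals $\sigma_m^{-1}$ and $\rho_n^{-1}$ take the place of the combinatorial dimensions $m,n$ that figure in Dyson's classical inequality \eqref{eq:Dyson}.

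The main obstacle is the weighted hyperplane-section estimate for $\cB^\top(s,\tr\gamma)\cap\vec v_0^\perp$. In the unweighted case this section is essentially isotropic and is controlled by an elementary volume computation; in the weighted case it is anisotropic, and one must carefully identify which coordinate directions of $\vec v_0$ dominate the normal vector and hence the section volume. This is what forces $\sigma_m^{-1}$ and $\rho_n^{-1}$, rather than any average of the $\sigma_j^{-1}$ or $\rho_i^{-1}$, to appear in the inequality: the worst-case hyperplane orientation for the dual problem aligns with the coordinate axis of smallest weight, along which the weighted box is least constrained. Once this geometric estimate is in place, the remainder of the derivation of \eqref{eq:weighted_Dyson} is a routine algebraic manipulation.
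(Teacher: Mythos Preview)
Your strategy is Mahler's transference principle, which is exactly what the paper uses. The paper packages it via pseudo-compounds: it shows that for suitably chosen $s,\delta$ one has $\cQ(s,\delta)\subseteq\cP(t,\gamma)^\ast$, and then invokes Theorem~\ref{t:Mahler_by_German} as a black box to pass from a nonzero point of $\La^\ast$ in $\cQ(s,\delta)$ to one of $\La$ in $c\cP(t,\gamma)$. Your hyperplane-section-plus-Minkowski argument is precisely how one \emph{proves} Theorem~\ref{t:Mahler_by_German}, so the two routes are the same idea; the paper's is more economical because it separates the geometry-of-numbers core from the parameter book-keeping.

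Two of your intermediate claims are wrong as stated and would derail the computation if taken literally. First, the covolume of $\Lambda_0^\top$ inside $\vec v_0^\perp$ is the \emph{Euclidean} norm $|\vec v_0|_2$ (this is Proposition~\ref{prop:orthogonal_sublattices}), not any weighted gauge of $\vec v_0$. Second, the $(d-1)$-volume of the section $\cB^\top(s,\tr\gamma)\cap\vec v_0^\perp$ is \emph{not} ``explicitly calculable from the weights''; it depends on the direction of $\vec v_0$, and one only obtains a lower bound, via Vaaler's theorem after rescaling the box to a cube. When this is carried out correctly, the factor $|\vec v_0|_2$ cancels against a matching Jacobian factor from the rescaling, and the surviving condition is precisely $\lambda_k\mu_k\leq c\prod_j\mu_j$ for every coordinate $k$ (with $\lambda_k,\mu_k$ the half-sides of the primal and dual boxes) --- i.e.\ the pseudo-compound inclusion the paper writes down in \eqref{eq:Q_subset_P}. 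It is the binding constraints in that system of $d$ scalar inequalities, not a ``worst-case hyperplane orientation'', that single out $\sigma_m$ and $\rho_n$: the inequalities $s^{-\delta\sigma_j}\leq t^{-\sigma_j+1-\gamma}$ and $s^{\rho_i}\leq t^{\gamma\rho_i+1-\gamma}$ are tightest at $j=m$ and $i=n$. You have not actually performed this optimisation, and it is the entire content of the proof once Mahler's theorem is in hand; without it your sketch does not yet yield the specific right-hand side of \eqref{eq:weighted_Dyson}.
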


\begin{theorem}\label{t:weighted_German}
  Set $\hat\omega=\hat\omega_{\pmb\sigma,\pmb\rho}(\Theta)$ and $\tr{\hat\omega}=\hat\omega_{\pmb\rho,\pmb\sigma}(\tr\Theta)$. Then
  \begin{equation}\label{eq:weighted_German}
    \tr{\hat\omega}\geq
    \begin{cases}
      \dfrac{1-\sigma_m\hat\omega^{-1}}{1-\sigma_m}\hskip 3mm\text{ if }\hat\omega\geq\sigma_m/\rho_n \\
      \hskip 2mm
      \dfrac{1-\rho_n\vphantom{1^{\big|}}}{1-\rho_n\hat\omega}\hskip 6.3mm\text{ if }\hat\omega\leq\sigma_m/\rho_n
    \end{cases}.
  \end{equation}
\end{theorem}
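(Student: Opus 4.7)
The plan is to mirror the strategy used by the author in \cite{german_AA_2012,german_MJCNT_2012} to prove the unweighted inequality \eqref{eq:German}, adapted to track the weights $\pmb\sigma,\pmb\rho$.

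Translate the problem into geometry of numbers: let $\Lambda = \Lambda(\Theta) = \{(\vec x, \Theta\vec x - \vec y) : (\vec x,\vec y)\in\Z^{m+n}\}$, a unimodular lattice, and set
\[\cP_t^{(\gamma)} = \{(\vec u,\vec v)\in\R^m\times\R^n : |u_j|\leq t^{\sigma_j},\,|v_i|\leq t^{-\gamma\rho_i}\},\]
so that $\hat\omega\geq\gamma$ iff $\cP_t^{(\gamma)}\cap\Lambda\setminus\{0\}\neq\emptyset$ for every sufficiently large $t$. Up to a coordinate swap, the dual lattice $\Lambda^*$ is $\Lambda(\tr\Theta)$, and $\tr\cP_s^{(\gamma')} = \{(\vec u,\vec v):|u_j|\leq s^{-\gamma'\sigma_j},\,|v_i|\leq s^{\rho_i}\}$ governs $\tr{\hat\omega}$ in the same manner.

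Fix $\gamma<\hat\omega$. Uniformity supplies a family $\{\vec z_t\}_{t\geq t_0}$ of nonzero lattice points, $\vec z_t\in\Lambda\cap\cP_t^{(\gamma)}$. The core step is a pseudocompound construction: for a target $s$ and a candidate exponent $\gamma'$, build a convex body $\cQ\subset\R^{m+n}$ whose volume forces a nonzero $\vec w\in\Lambda^*\cap\cQ$ orthogonal to some $\vec z_t$, with $t$ chosen in terms of $s$. A natural choice is $\cQ=\tr\cP_s^{(\gamma')}\cap\{\vec w:|\vec w\cdot\vec z_t|<1\}$: Minkowski's first theorem together with the integrality of the pairing $\vec w\cdot\vec z_t\in\Z$ yield $\vec w\perp\vec z_t$, and since the first $m$ side lengths $s^{-\gamma'\sigma_j}$ of $\tr\cP_s^{(\gamma')}$ are less than $1$, the integer block of $\vec w$ cannot vanish, producing a bona fide transposed approximation. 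The naive volume estimate $\vol\cQ\gtrsim 2^{m+n}s^{1-\gamma'}/\max(1,\eta)$, where $\eta=\sum_j s^{-\gamma'\sigma_j}|u_j|+\sum_i s^{\rho_i}|v_i|$, reduces the Minkowski condition to $s^{1-\gamma'}\gtrsim\max(1,\eta)$; using $|u_j|\leq t^{\sigma_j}$ and $|v_i|\leq t^{-\gamma\rho_i}$ bounds $\eta$ by $\sum_j(s^{-\gamma'}t)^{\sigma_j}+\sum_i(st^{-\gamma})^{\rho_i}$, and as long as the respective bases are at most $1$ the maxima are attained exactly at the smallest weights $\sigma_m$ and $\rho_n$.

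The case split $\hat\omega\gtrless\sigma_m/\rho_n$ arises from which block of $\vec z_t$ dominates the support function $\eta$: in the regime $\hat\omega\geq\sigma_m/\rho_n$, tuning $t=t(s)$ so that the $\vec v$-contribution to $\eta$ is killed while $(s^{-\gamma'}t)^{\sigma_m}$ becomes the active constraint yields $\gamma'\leq(1-\sigma_m/\gamma)/(1-\sigma_m)$; the symmetric choice of $t$ in the complementary regime yields $\gamma'\leq(1-\rho_n)/(1-\rho_n\gamma)$. The two formulas coincide at $\hat\omega=\sigma_m/\rho_n$, and letting $\gamma\to\hat\omega$ completes the proof. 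The main obstacle is the volume lemma: a literal Minkowski bound on $\cQ$ via the support function $\eta$ delivers only the Dyson-type estimate \eqref{eq:weighted_Dyson}, and extracting the sharper \eqref{eq:weighted_German} requires the refined slicing argument underlying \cite{german_MJCNT_2012}, suitably weighted, which exploits the existence of $\vec z_{t'}$ for $t'$ near $t$---i.e., the full force of uniformity---rather than a lone lattice point at a single scale.
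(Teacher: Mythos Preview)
Your plan is sound at the level of strategy---the lattice setup, the observation that a single-point Minkowski argument yields only the Dyson bound \eqref{eq:weighted_Dyson}, the origin of the case split in which block of $\vec z_t$ dominates, and the recognition that uniformity must supply a \emph{second} lattice point near scale $t$. But the final paragraph is not a proof: it names ``the refined slicing argument underlying \cite{german_MJCNT_2012}, suitably weighted'' as the missing ingredient and stops. That adaptation is exactly the content of Theorem~\ref{t:weighted_German}, and your sketch provides no mechanism for it---no choice of the second scale $t'$, no argument that $\vec z_t,\vec z_{t'}$ are independent, and no volume or pseudo-compound computation showing that the $(d-2)$-dimensional slice of $\tr\cP_s^{(\gamma')}$ orthogonal to both still has enough mass to force a nonzero dual point.

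The paper fills this gap on the dual side. Fixing $\alpha>1$ with $\alpha<\hat\omega_{\pmb\rho,\pmb\sigma}(\tr\Theta)$, it defines $\gamma$ by \eqref{eq:gamma_via_alpha} and the auxiliary $s,\delta$ by \eqref{eq:s_and_delta_via_t_and_gamma_uniform_case}, then builds a one-parameter `stem' of nested parallelepipeds $\cQ_r$ interpolating between the root $\cQ(s,\delta)$ and a family of `leaves' $\cQ(r,\alpha)$. A minimality argument (Lemmas~\ref{l:there_is_a_good_leaf}--\ref{l:good_node_and_antinode}) shows that if the root contains no nonzero point of $\La^\ast$ but every leaf does, then some node/anti-node pair carries two distinct points $\vec v_1,\vec v_2\in\La^\ast$. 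The heart of the proof is then the coordinatewise verification (Lemma~\ref{l:main}, inequalities \eqref{eq:sigma_sigma}--\eqref{eq:sigma_rho}) that $\vec v_1\wedge\vec v_2$ lands in twice the second pseudo-compound $\cP(t,\gamma)^{\circledast}$; this is where the weights $\sigma_m,\rho_n$ enter nontrivially and where the piecewise definition of $\gamma$ is exactly what makes both inequalities in \eqref{eq:negative} hold simultaneously. Theorem~\ref{t:second_pseudo_compound} (the second-pseudo-compound analogue of Mahler) then yields a nonzero point of $\La$ in a bounded dilate of $\cP(t,\gamma)$. None of this is routine bookkeeping; the parameter choices and the wedge-product estimates are the substance of the argument, and your proposal does not supply them.
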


Clearly, in the `non-weighted' case, when all the $\sigma_j$ are equal to $1/m$ and all the $\rho_i$ are equal to $1/n$, \eqref{eq:weighted_Dyson} turns into \eqref{eq:Dyson}, and \eqref{eq:weighted_German} turns into \eqref{eq:German}.

We cannot avoid mentioning a recent paper \cite{ghosh_marnat_Pisa_2019} by S.\,Chow, A.\,Ghosh et al., where they propose another generalisation of Dyson's inequality, different from \eqref{eq:weighted_Dyson}. Namely, they showed that
\begin{equation}\label{eq:nedo_weighted_Dyson}
  \tr\omega\geq
  \frac{(m+n-1)\big(\rho_n^{-1}+\sigma_m^{-1}\omega\big)+\sigma_1^{-1}(\omega-1)}
       {(m+n-1)\big(\rho_n^{-1}+\sigma_m^{-1}\omega\big)-\rho_1^{-1}(\omega-1)}.
\end{equation}
For $\omega=1$ \eqref{eq:weighted_Dyson} and \eqref{eq:nedo_weighted_Dyson} obviously coincide, as well as in the Dyson's `non-weighted' case. In every other case \eqref{eq:weighted_Dyson} is strictly stronger than \eqref{eq:nedo_weighted_Dyson}. At first glance, this fact seems to be rather surprising, as both \eqref{eq:weighted_Dyson} and \eqref{eq:nedo_weighted_Dyson} are proved essentially by applying Mahler's theorem. However, there is a certain freedom of choice, at which moment to apply Mahler's theorem. Different choices result in different inequalities. We spend some time analysing this phenomenon in Section \ref{sec:variety}.

\paragraph{Concerning inversion of \eqref{eq:weighted_Dyson} and \eqref{eq:weighted_German}.}

Sometimes it is useful to have inverted versions of \eqref{eq:weighted_Dyson} and \eqref{eq:weighted_German}. For instance, such a need arises in Section \ref{sec:inhomogeneous}.

First of all let us notice that Minkowski's convex body theorem provides the following trivial bound we should always keep in mind:
\[\omega(\Theta)\geq\hat\omega(\Theta)\geq1.\]

The inequality \eqref{eq:weighted_Dyson} is very simple to invert, as the function
\[
  f(x)=\frac{\big(\rho_n^{-1}-1\big)+\sigma_m^{-1}x}
            {\rho_n^{-1}+\big(\sigma_m^{-1}-1\big)x}
\]
maps $[1,+\infty)$ monotonously onto $[1,(1-\sigma_m)^{-1})$. Furthermore, any statement concerning $\Theta$ produces a statement concerning $\tr\Theta$ by just swapping $(m,n,\pmb\sigma,\pmb\rho,\Theta)$ for $(n,m,\pmb\rho,\pmb\sigma,\tr\Theta)$. Thus, we get

\begin{corollary}\label{cor:weighted_Dyson_inversed}
  Set $\omega=\omega_{\pmb\sigma,\pmb\rho}(\Theta)$ and $\tr\omega=\omega_{\pmb\rho,\pmb\sigma}(\tr\Theta)$. Suppose $\omega<(1-\rho_n)^{-1}$.
  Then
  \begin{equation*}
    \tr\omega\leq
    \frac{\rho_n}{\sigma_m}\cdot
    \frac{\omega-(1-\sigma_m)}
         {1-(1-\rho_n)\omega}.
  \end{equation*}
\end{corollary}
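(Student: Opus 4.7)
The plan is to apply Theorem \ref{t:weighted_Dyson} to the transposed matrix $\tr\Theta$, which (as noted in the excerpt) produces a lower bound on $\omega$ in terms of $\tr\omega$, and then algebraically invert this bound. Concretely, swapping $(m,n,\pmb\sigma,\pmb\rho,\Theta)$ for $(n,m,\pmb\rho,\pmb\sigma,\tr\Theta)$ in \eqref{eq:weighted_Dyson} yields
\[
  \omega\geq\frac{\big(\sigma_m^{-1}-1\big)+\rho_n^{-1}\tr\omega}
                {\sigma_m^{-1}+\big(\rho_n^{-1}-1\big)\tr\omega}.
\]

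Next I would solve this for $\tr\omega$. Since by Minkowski's theorem $\tr\omega\geq 1$ and $\sigma_m^{-1},\rho_n^{-1}\geq 1$, the denominator on the right is positive, so cross-multiplication is legal and gives
\[
  \tr\omega\cdot\Big(\rho_n^{-1}-\big(\rho_n^{-1}-1\big)\omega\Big)
  \leq \sigma_m^{-1}\omega-\big(\sigma_m^{-1}-1\big).
\]
The coefficient of $\tr\omega$ on the left is positive precisely when $\omega<\rho_n^{-1}/(\rho_n^{-1}-1)=(1-\rho_n)^{-1}$, which is exactly the hypothesis of the corollary. Dividing by this positive quantity gives an upper bound on $\tr\omega$.

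It remains only to simplify both sides. The numerator equals $\sigma_m^{-1}\big(\omega-(1-\sigma_m)\big)$ after factoring out $\sigma_m^{-1}$, and similarly the denominator equals $\rho_n^{-1}\big(1-(1-\rho_n)\omega\big)$. Combining these yields the claimed expression
\[
  \tr\omega\leq\frac{\rho_n}{\sigma_m}\cdot
  \frac{\omega-(1-\sigma_m)}{1-(1-\rho_n)\omega}.
\]

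There is no substantive obstacle here: the argument is entirely formal, amounting to inverting the Möbius transformation $f$ described in the excerpt on the interval $[1,+\infty)$ where it is monotonic. The only subtlety worth flagging explicitly is the sign check on the denominator that pinpoints the hypothesis $\omega<(1-\rho_n)^{-1}$; otherwise the derivation is routine.
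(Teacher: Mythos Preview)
Your proposal is correct and follows essentially the same approach as the paper: swap $(m,n,\pmb\sigma,\pmb\rho,\Theta)$ for $(n,m,\pmb\rho,\pmb\sigma,\tr\Theta)$ in Theorem~\ref{t:weighted_Dyson} and then invert the resulting M\"obius transformation, with the hypothesis $\omega<(1-\rho_n)^{-1}$ arising exactly as the positivity condition on the denominator. The paper phrases the two steps in the opposite order (invert, then swap), but this is immaterial.
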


As for inverting \eqref{eq:weighted_German}, it needs an additional observation due to the splitting into two cases. If $\rho_n\geq\sigma_m$, then only the first case remains, and the argument is very simple. Suppose $\rho_n<\sigma_m$. Let us set
  \[
    f_1(x)=\dfrac{1-\rho_n}{1-\rho_nx}\,,
    \qquad
    f_2(x)=\dfrac{1-\sigma_mx^{-1}}{1-\sigma_m}\,.
  \]
The mappings
\[
  f_1:
  [1,\sigma_m/\rho_n]
  \to
  \bigg[1,\frac{1-\rho_n}{1-\sigma_m}\bigg],
  \qquad
  f_2:
  [\sigma_m/\rho_n,+\infty)
  \to
  \bigg[\frac{1-\rho_n}{1-\sigma_m}\,,\frac1{1-\sigma_m}\bigg)
\]
are monotonous and one-to-one. Hence, denoting by $f_1^{-1}$ and $f_2^{-1}$ the corresponding inverse functions, we get
\begin{equation*}
  \begin{array}{l}
    \begin{cases}
      1\leq x\leq\sigma_m/\rho_n \\
      f_1(x)\leq y<\dfrac1{1-\sigma_m}
    \end{cases} \\
    \text{or} \\
    \begin{cases}
      x\geq\sigma_m/\rho_n \\
      f_2(x)\leq y<\dfrac1{1-\sigma_m}
    \end{cases}
  \end{array}
  \iff\ \ \
  \begin{array}{l}
    \begin{cases}
      1\leq y\leq\dfrac{1-\rho_n}{1-\sigma_m} \\
      1\leq x\leq f_1^{-1}(y)
    \end{cases} \\
    \text{or} \\
    \begin{cases}
      \dfrac{1-\rho_n}{1-\sigma_m}\leq y<\dfrac1{1-\sigma_m} \\
      1\leq x\leq f_2^{-1}(y)
    \end{cases}
  \end{array}
  \text{(cf. Fig.\,\ref{fig:inversion})}.
\end{equation*}

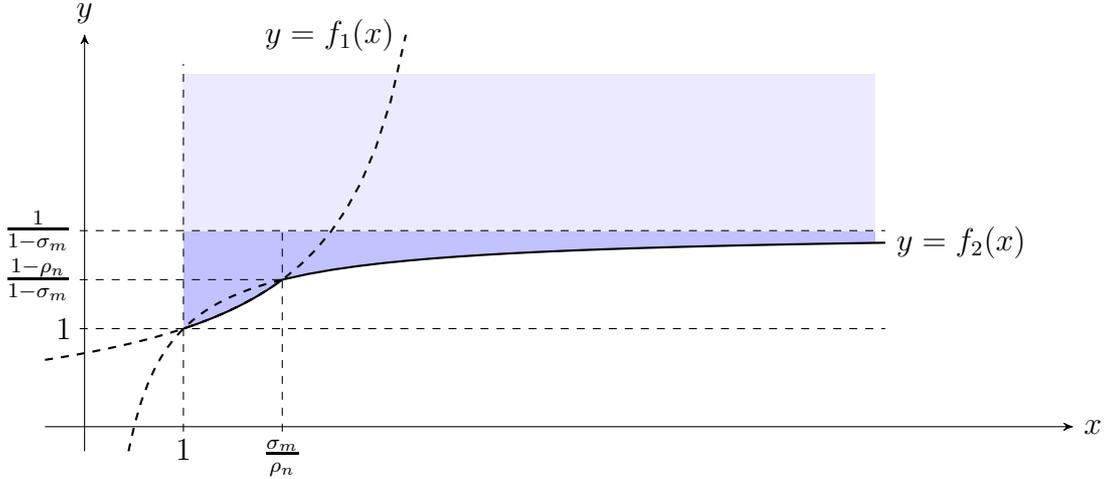
\begin{figure}[h]
\centering
\begin{tikzpicture}[scale=1.3]
  \fill[blue!24!, domain=1:2, variable=\x] plot ({\x}, {(3/4)/(1-(1/4)*\x)}) -- (2,2) -- (1,2) -- cycle;
  \fill[blue!24!, domain=2:8, variable=\x] plot (\x, {2-\x^(-1)}) -- (8,2) -- (1,2) -- cycle;
  \fill[blue!8!] (1,2) -- (8,2) -- (8,3.6) -- (1,3.6) -- cycle;

  \draw[->,>=stealth'] (-0.4,0) -- (10,0) node[right] {$x$};
  \draw[->,>=stealth'] (0,-0.25) -- (0,4) node[above] {$y$};

  \draw[color=black,dashed,thick] plot[domain=-0.4:3.25] (\x, {(3/4)/(1-(1/4)*\x)}) node[left]{$y=f_1(x)$};
  \draw[color=black,dashed,thick] plot[domain=4/9:2] (\x, {2-\x^(-1)});
  \draw[color=black,thick] plot[domain=1:2] (\x, {(3/4)/(1-(1/4)*\x)});
  \draw[color=black,thick] plot[domain=2:8.1] (\x, {2-\x^(-1)}) node[right]{$y=f_2(x)$};

  \draw[color=black,dashed] (1,-0.05) -- (1,3.7);
  \draw[color=black,dashed] (2,-0.05) -- (2,2);
  \draw[color=black,dashed] (-0.05,1) -- (8.1,1);
  \draw[color=black,dashed] (-0.05,3/2) -- (2,3/2);
  \draw[color=black,dashed] (-0.05,2) -- (8.1,2);

  \draw (1,-0.02) node[below]{$1$};
  \draw (2,-0.02) node[below]{$\frac{\sigma_m}{\rho_n}$};
  \draw (-0.02,1) node[left]{$1$};
  \draw (-0.02,3/2) node[left]{$\frac{1-\rho_n}{1-\sigma_m}$};
  \draw (-0.02,2) node[left]{$\frac1{1-\sigma_m}$};
\end{tikzpicture}
\caption{How to invert \eqref{eq:weighted_German}} \label{fig:inversion}
\end{figure}

Swapping again $(m,n,\pmb\sigma,\pmb\rho,\Theta)$ for $(n,m,\pmb\rho,\pmb\sigma,\tr\Theta)$, we get

\begin{corollary}\label{cor:weighted_German_inversed}
  Set $\hat\omega=\hat\omega_{\pmb\sigma,\pmb\rho}(\Theta)$ and $\tr{\hat\omega}=\hat\omega_{\pmb\rho,\pmb\sigma}(\tr\Theta)$. Suppose $\hat\omega<(1-\rho_n)^{-1}$.
  Then
  \begin{equation*}
    \tr{\hat\omega}\leq
    \begin{cases}
      \dfrac{\rho_n}{1-(1-\rho_n)\hat\omega}\hskip 11mm\text{ if }\hat\omega\geq\dfrac{1-\sigma_m}{1-\rho_n} \\
      \dfrac{1-(1-\sigma_m)\hat\omega^{-1}\vphantom{1^{\big|}}}{\sigma_m}\hskip 6mm\text{ if }\hat\omega\leq\dfrac{1-\sigma_m}{1-\rho_n}
    \end{cases},
  \end{equation*}
  assuming that $(1-\rho_n)^{-1}=+\infty$ for $\rho_n=1$.
\end{corollary}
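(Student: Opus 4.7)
The plan is to combine the duality swap $(m,n,\pmb\sigma,\pmb\rho,\Theta)\leftrightarrow(n,m,\pmb\rho,\pmb\sigma,\tr\Theta)$ with the piecewise inversion procedure already illustrated in Figure \ref{fig:inversion}. Applying Theorem \ref{t:weighted_German} to $\tr\Theta$ instead of $\Theta$ produces a \emph{lower} bound for $\hat\omega$ as a piecewise function of $\tr{\hat\omega}$, with the breakpoint now at $\tr{\hat\omega}=\rho_n/\sigma_m$ and the roles of $\sigma_m$ and $\rho_n$ interchanged. Each branch is strictly monotone and continuous, and together they map $[1,+\infty)$ bijectively onto $[1,(1-\rho_n)^{-1})$.

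Under the standing hypothesis $\hat\omega<(1-\rho_n)^{-1}$, the value $\hat\omega$ lies in the range of this piecewise function, so the bound inverts into an upper bound for $\tr{\hat\omega}$. I would solve the two defining equations separately: $\hat\omega=(1-\sigma_m)/(1-\sigma_m\tr{\hat\omega})$ inverts to $\tr{\hat\omega}\leq(1-(1-\sigma_m)\hat\omega^{-1})/\sigma_m$, while $\hat\omega=(1-\rho_n(\tr{\hat\omega})^{-1})/(1-\rho_n)$ inverts to $\tr{\hat\omega}\leq\rho_n/(1-(1-\rho_n)\hat\omega)$. The breakpoint $\tr{\hat\omega}=\rho_n/\sigma_m$ corresponds under both branches to the value $\hat\omega=(1-\sigma_m)/(1-\rho_n)$, which is precisely the threshold separating the two cases in the statement of the corollary.

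The main obstacle is really just bookkeeping: one must keep straight which branch of the inverse matches which range of $\hat\omega$, and check that the two formulas agree at the common breakpoint. A minor additional point is the degenerate case $\rho_n=1$, where the threshold $(1-\sigma_m)/(1-\rho_n)$ becomes $+\infty$, so the first sub-case of the corollary is vacuous and only the second applies; the convention $(1-\rho_n)^{-1}=+\infty$ built into the hypothesis makes this transparent. Aside from this, once Theorem \ref{t:weighted_German} is in hand, the corollary is pure algebraic manipulation parallel to the inversion recipe laid out in the text preceding its statement.
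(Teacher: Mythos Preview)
Your proposal is correct and follows essentially the same approach as the paper. The only cosmetic difference is the order of operations: the paper first carries out the piecewise inversion of \eqref{eq:weighted_German} (as in Figure~\ref{fig:inversion}) and \emph{then} performs the swap $(m,n,\pmb\sigma,\pmb\rho,\Theta)\leftrightarrow(n,m,\pmb\rho,\pmb\sigma,\tr\Theta)$, whereas you swap first and invert afterwards; the underlying algebra and the handling of the breakpoint are identical.
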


\section{Case of one linear form and Marnat's examples}\label{sec:one_linear_form}

It is worth singling out the case $m=1$ and the case $n=1$, as transference theorems are more often applied in those particular cases, than in the general one.

Theorem \ref{t:weighted_Dyson} and Corollary \ref{cor:weighted_Dyson_inversed} provide the following statement for $n=1$.

\begin{theorem}\label{t:weighted_Dyson_n=1}
  Set $\omega=\omega_{\pmb\sigma,\pmb\rho}(\Theta)$ and $\tr\omega=\omega_{\pmb\rho,\pmb\sigma}(\tr\Theta)$. Suppose $n=1$. Then
  \begin{equation}\label{eq:weighted_Dyson_n=1}
    \frac{\omega}
         {\sigma_m+(1-\sigma_m)\omega}
    \leq
    \tr\omega
    \leq
    \frac{\omega-(1-\sigma_m)}
         {\sigma_m}.
  \end{equation}
\end{theorem}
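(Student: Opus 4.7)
The plan is to derive both inequalities as immediate specializations of Theorem \ref{t:weighted_Dyson} and Corollary \ref{cor:weighted_Dyson_inversed} to the case $n=1$. The key observation is that when $n=1$, the normalisation $\sum_{i=1}^n\rho_i=1$ forces $\rho_1=1$, so $\rho_n=1$ and hence $\rho_n^{-1}=1$. Both bounds then fall out by a one-line substitution.

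For the left-hand inequality, I would plug $\rho_n^{-1}=1$ into \eqref{eq:weighted_Dyson}: the term $\rho_n^{-1}-1$ in the numerator vanishes, and $\rho_n^{-1}$ in the denominator becomes $1$, leaving
\[
  \tr\omega\geq\frac{\sigma_m^{-1}\omega}{1+(\sigma_m^{-1}-1)\omega}.
\]
Multiplying numerator and denominator by $\sigma_m$ immediately yields the lower bound $\omega/(\sigma_m+(1-\sigma_m)\omega)$.

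For the right-hand inequality, I would apply Corollary \ref{cor:weighted_Dyson_inversed}. With $\rho_n=1$, the hypothesis $\omega<(1-\rho_n)^{-1}$ becomes $\omega<+\infty$, which is either trivially satisfied or makes the upper bound vacuous. The prefactor $\rho_n/\sigma_m$ reduces to $\sigma_m^{-1}$, while the denominator $1-(1-\rho_n)\omega$ simplifies to $1$, giving precisely $(\omega-(1-\sigma_m))/\sigma_m$.

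There is no real obstacle here: the statement is purely a corollary obtained by setting $n=1$ and reading off what happens to $\rho_n$. The only thing worth being careful about is the boundary behaviour in Corollary \ref{cor:weighted_Dyson_inversed}, namely the convention $(1-\rho_n)^{-1}=+\infty$ when $\rho_n=1$, which ensures that the hypothesis of the corollary is not an obstruction in this special case.
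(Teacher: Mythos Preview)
Your proposal is correct and matches the paper's own argument exactly: the paper simply states that Theorem~\ref{t:weighted_Dyson} and Corollary~\ref{cor:weighted_Dyson_inversed} provide Theorem~\ref{t:weighted_Dyson_n=1} for $n=1$, and your substitution $\rho_n=1$ carries this out explicitly. Your remark about the boundary convention $(1-\rho_n)^{-1}=+\infty$ is also on point.
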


As for the uniform exponents, it appears that both for $m=1$ and for $n=1$ exactly one of the inequalities \eqref{eq:weighted_German} survives. For $n=1$ we have $\rho_n=1$ and $\hat\omega(\Theta)\geq1>\sigma_m=\sigma_m/\rho_n$, i.e. the second alternative in \eqref{eq:weighted_German} is inconsistent.

The case $m=1$ is slightly more difficult. It appears that in this case $\hat\omega(\Theta)$ cannot be greater than $\rho_n^{-1}$ (unless $\Theta$ is rational), which eliminates the first alternative in \eqref{eq:weighted_German}. In fact, a stronger statement holds.

\begin{proposition}\label{prop:extra_bound}
  Let $m=1$.

  \textup{(i)} If $\Theta\in\Q^{n\times1}$, then $\hat\omega(\Theta)=\omega(\Theta)=\tr{\hat\omega(\Theta)}=\tr{\omega(\Theta)}=+\infty$.

  \textup{(ii)} If $\Theta\notin\Q^{n\times1}$, consider the minimal $k$ such that $\theta_{k1}$ is irrational. Then $\hat\omega(\Theta)\leq\rho_k^{-1}$.
\end{proposition}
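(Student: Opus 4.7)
For (i), the hypothesis $\Theta \in \Q^{n\times 1}$ allows a zero-error construction. If $q \in \N$ is a common denominator of $\theta_{11},\ldots,\theta_{n1}$, then for every $k \in \N$ the pair $(x,\vec y) = (kq,\,kq\Theta)$ is a nonzero integer solution with $\Theta x - \vec y = 0$, so \eqref{eq:system_with_weights} admits nonzero integer solutions (with vanishing second inequality) whenever $t \geq q$; this yields $\hat\omega(\Theta) = \omega(\Theta) = +\infty$. For the transposed exponent I would use the analogous choice $\vec x = (kq,0,\ldots,0)^\top$, $y = kq\theta_{11}$, giving $\tr\Theta\vec x - y = 0$ and $|\vec x|_{\pmb\rho} = (kq)^{1/\rho_1}$, so $k = \lfloor t^{\rho_1}/q \rfloor$ works for all sufficiently large $t$.

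The plan for (ii) is to reduce to the classical one-variable fact that the uniform Diophantine exponent of a single irrational equals $1$. Fix $\gamma > \rho_k^{-1}$ and aim to exhibit arbitrarily large $t$ at which \eqref{eq:system_with_weights} has no nonzero integer solution. For every $i < k$, write $\theta_{i1} = a_i/b_i$ in lowest terms; the bound $|\theta_{i1}x - y_i| \leq t^{-\gamma\rho_i}$ drops below $1/b_i$ once $t$ is large, but $\theta_{i1}x - y_i$ lies in $b_i^{-1}\Z$, forcing it to vanish and hence $b_i \mid x$. Therefore $x$ must be a nonzero multiple of $B := \mathrm{lcm}(b_1,\ldots,b_{k-1})$ (with $B = 1$ if $k=1$). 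Setting $x = Bx'$ and $\beta := B\theta_{k1}$ (irrational), the surviving $k$-th constraint becomes $\|\beta x'\| \leq t^{-\gamma\rho_k}$ with $0 < |x'| \leq t/B$. I would then invoke the convergents $p_s/q_s$ of $\beta$: the best-approximation property gives $\|\beta x'\| \geq \|\beta q_s\| > (2q_{s+1})^{-1}$ for every nonzero $x' \in \Z$ with $|x'| < q_{s+1}$. Choosing $t_s := Bq_{s+1} - 1$ makes $t_s/B < q_{s+1}$, while $t_s^{-\gamma\rho_k} \asymp (Bq_{s+1})^{-\gamma\rho_k}$; since $\gamma\rho_k > 1$, for $s$ large $t_s^{-\gamma\rho_k}$ falls below $(2q_{s+1})^{-1}$, excluding any admissible $x'$ at $t = t_s$. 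As $t_s \to \infty$, this gives $\hat\omega(\Theta) \leq \rho_k^{-1}$.

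The only real subtlety is the sublattice reduction: one must verify that restricting $x$ to $B\Z$ does not make $\theta_{k1}$ better approximable than the universal one-dimensional bound permits. The estimate above shows the restriction only costs a multiplicative constant of order $B^{\gamma\rho_k}$, harmlessly absorbed by the strict inequality $\gamma\rho_k > 1$. Everything else is routine application of Dirichlet's theorem (or, equivalently, the theory of continued fractions).
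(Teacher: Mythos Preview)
Your argument is correct, and the continued-fraction core of part (ii) is exactly the paper's idea. The one noteworthy difference is that your sublattice reduction via $B=\mathrm{lcm}(b_1,\ldots,b_{k-1})$ is unnecessary: to show that the system \eqref{eq:system_with_weights} has \emph{no} nonzero solution at a given $t$, it suffices that a \emph{single} inequality fails, so the paper simply ignores the constraints for $i\neq k$ and works directly with $|\theta_{k1}x-y_k|$ and the convergents of $\theta_{k1}$ itself. Your detour through $\beta=B\theta_{k1}$ is harmless (as you observe, it costs only a constant factor absorbed by $\gamma\rho_k>1$), but it does not buy anything, and the final ``subtlety'' paragraph can be deleted once you notice that the $i<k$ constraints need never be invoked at all.
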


\begin{proof}
  Statement \textup{(i)} is trivial. Let us prove statement \textup{(ii)}. The argument is the same as in the `non-weighted' case.

  Let $p_{\nu-1}/q_{\nu-1}$ and $p_\nu/q_\nu$ be two consecutive convergents for $\theta_{k1}$. Set $t=q_\nu-\varepsilon$ with arbitrary positive $\varepsilon$. Then for every nonzero $(x,y_1,\ldots,y_n)\in\Z^{n+1}$ such that $|x|\leq t$ we have
  \[|\theta_{k1}x-y_k|\geq|\theta_{k1}q_{\nu-1}-p_{\nu-1}|\geq\Big|\frac{p_\nu}{q_\nu}q_{\nu-1}-p_{\nu-1}\Big|=\frac1{q_\nu}=(t+\varepsilon)^{-1}.\]
  Thus, given $\gamma>\rho_k^{-1}$, one can find $t$, arbitrarily large, for which the system \eqref{eq:system_with_weights} admits no nonzero integer solutions. Hence $\hat\omega(\Theta)\leq\rho_k^{-1}$.
\end{proof}

Theorem \ref{t:weighted_German}, Corollary \ref{cor:weighted_German_inversed}, and Proposition \ref{prop:extra_bound} provide the following statement for $n=1$.

\begin{theorem}\label{t:weighted_German_n=1}
  Set $\hat\omega=\hat\omega_{\pmb\sigma,\pmb\rho}(\Theta)$ and $\tr{\hat\omega}=\hat\omega_{\pmb\rho,\pmb\sigma}(\tr\Theta)$. Suppose $n=1$ and $\Theta\notin\Q^{1\times m}$. Then
  \begin{equation}\label{eq:weighted_German_n=1}
    \begin{aligned}
      & (1-\sigma_m)\tr{\hat\omega}+\sigma_m\hat\omega^{-1}\geq1, \\
      & \sigma_m\tr{\hat\omega}+(1-\sigma_m)\hat\omega^{-1}\leq1.
      \vphantom{1^{|}}
    \end{aligned}
  \end{equation}
  Moreover, we also have
  \begin{equation*} 
    \sigma_k\tr{\hat\omega}\leq1,
  \end{equation*}
  where $k$ is the minimal index such that $\theta_{1k}$ is irrational.
\end{theorem}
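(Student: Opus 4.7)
My plan is to deduce Theorem~\ref{t:weighted_German_n=1} from the machinery already in place, namely Theorem~\ref{t:weighted_German}, its inversion Corollary~\ref{cor:weighted_German_inversed}, and Proposition~\ref{prop:extra_bound}. With $n=1$ we have $\rho_n=\rho_1=1$; the two inequalities in \eqref{eq:weighted_German_n=1} then correspond to two applications of Theorem~\ref{t:weighted_German}, one direct and one after swapping roles, while the final assertion is a direct corollary of Proposition~\ref{prop:extra_bound}(ii) applied to the transpose.

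For the first inequality I would simply specialise \eqref{eq:weighted_German} at $\rho_n=1$. The branching threshold $\sigma_m/\rho_n$ collapses to $\sigma_m$, and since $\hat\omega\geq 1\geq\sigma_m$ we are always in the first branch, which reads $\tr{\hat\omega}\geq(1-\sigma_m\hat\omega^{-1})/(1-\sigma_m)$. Clearing denominators yields $(1-\sigma_m)\tr{\hat\omega}+\sigma_m\hat\omega^{-1}\geq 1$.

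For the second inequality I would swap $(m,n,\pmb\sigma,\pmb\rho,\Theta)\leftrightarrow(n,m,\pmb\rho,\pmb\sigma,\tr\Theta)$ and reapply Theorem~\ref{t:weighted_German} to $\tr\Theta$. In the swapped version the branching threshold becomes $\rho_n/\sigma_m=1/\sigma_m$, so I must determine on which side of $1/\sigma_m$ the quantity $\tr{\hat\omega}$ lies. This is precisely where Proposition~\ref{prop:extra_bound}(ii) enters: applied to $\tr\Theta$, which has a single column and row-weights $\pmb\sigma$, it produces $\tr{\hat\omega}\leq\sigma_k^{-1}$ with $k$ the minimal index such that $\theta_{1k}$ is irrational. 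This is at once the last assertion of the theorem, and, because $\sigma_k\geq\sigma_m$, it also secures $\tr{\hat\omega}\leq 1/\sigma_m$, placing us in the second branch of the swapped \eqref{eq:weighted_German}. That branch gives $\hat\omega\geq(1-\sigma_m)/(1-\sigma_m\tr{\hat\omega})$, which after routine rearrangement becomes $\sigma_m\tr{\hat\omega}+(1-\sigma_m)\hat\omega^{-1}\leq 1$.

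The only delicate point is this branch selection: substituting $\rho_n=1$ directly into the first branch of the swapped \eqref{eq:weighted_German} would divide by $1-\rho_n=0$, so Proposition~\ref{prop:extra_bound}(ii) is genuinely indispensable for legitimately reaching the second branch. This simultaneously explains why the irrationality hypothesis $\Theta\notin\Q^{1\times m}$ is required and why the additional inequality $\sigma_k\tr{\hat\omega}\leq 1$ drops out of the same argument for free.
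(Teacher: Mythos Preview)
Your proposal is correct and follows essentially the same route as the paper, which derives Theorem~\ref{t:weighted_German_n=1} from Theorem~\ref{t:weighted_German}, Corollary~\ref{cor:weighted_German_inversed}, and Proposition~\ref{prop:extra_bound}. The only cosmetic difference is that for the second inequality you swap $(m,n,\pmb\sigma,\pmb\rho,\Theta)$ and reapply Theorem~\ref{t:weighted_German} directly, whereas the paper would simply quote Corollary~\ref{cor:weighted_German_inversed} (which is itself obtained by that very swap); your use of Proposition~\ref{prop:extra_bound}(ii) to force the correct branch and to supply the extra bound $\sigma_k\tr{\hat\omega}\leq1$ is exactly what the paper intends.
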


For $m=2$ we obviously have $\sigma_m=\sigma_2$ and $1-\sigma_m=\sigma_1$, which makes \eqref{eq:weighted_German_n=1} look even nicer.

\begin{corollary}\label{cor:weighted_German_n=1_m=2}
  Set $\hat\omega=\hat\omega_{\pmb\sigma,\pmb\rho}(\Theta)$ and $\tr{\hat\omega}=\hat\omega_{\pmb\rho,\pmb\sigma}(\tr\Theta)$. Suppose $n=1$, $m=2$, and $\Theta\notin\Q^{1\times2}$. Then
  \begin{equation}\label{eq:weighted_German_n=1_m=2}
    \begin{aligned}
      & \sigma_1\tr{\hat\omega}+\sigma_2\hat\omega^{-1}\geq1, \\
      & \sigma_2\tr{\hat\omega}+\sigma_1\hat\omega^{-1}\leq1.
      \vphantom{1^{|}}
    \end{aligned}
  \end{equation}
  Moreover, if $\theta_{11}$ is irrational, we also have
  \begin{equation}\label{eq:weighted_German_n=1_m=2_extra}
    \sigma_1\tr{\hat\omega}\leq1.
  \end{equation}
\end{corollary}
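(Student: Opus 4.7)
The plan is to derive this corollary as a direct specialization of Theorem \ref{t:weighted_German_n=1} to the case $m=2$, with no further Diophantine input required. I first observe that for $m=2$ we have $\sigma_m=\sigma_2$, and since the weights satisfy $\sigma_1+\sigma_2=1$, we also get $1-\sigma_m=\sigma_1$. Substituting these two identities into the pair of inequalities \eqref{eq:weighted_German_n=1} of Theorem \ref{t:weighted_German_n=1} immediately rewrites them as
\[
  \sigma_1\tr{\hat\omega}+\sigma_2\hat\omega^{-1}\geq1
  \qquad\text{and}\qquad
  \sigma_2\tr{\hat\omega}+\sigma_1\hat\omega^{-1}\leq1,
\]
which is exactly \eqref{eq:weighted_German_n=1_m=2}.

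For the extra inequality \eqref{eq:weighted_German_n=1_m=2_extra}, I appeal to the final assertion of Theorem \ref{t:weighted_German_n=1}, namely $\sigma_k\tr{\hat\omega}\leq1$ with $k$ the minimal index such that $\theta_{1k}$ is irrational. Under the hypothesis that $\theta_{11}$ is irrational, this minimal index is $k=1$, so $\sigma_k=\sigma_1$, and the stated bound $\sigma_1\tr{\hat\omega}\leq1$ follows at once.

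There is no genuine obstacle here; the only mildly nontrivial observation is the identity $1-\sigma_m=\sigma_1$, which is what makes the two inequalities of \eqref{eq:weighted_German_n=1_m=2} take the visibly symmetric form (the coefficients of $\tr{\hat\omega}$ and $\hat\omega^{-1}$ are swapped between the two inequalities). Of course, the hypothesis $\Theta\notin\Q^{1\times2}$ of the corollary is precisely what is needed to invoke Theorem \ref{t:weighted_German_n=1}, so no additional assumptions have to be checked.
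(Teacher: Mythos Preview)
Your proposal is correct and matches the paper's approach exactly: the paper does not give a separate proof of this corollary but simply remarks, just before stating it, that ``for $m=2$ we obviously have $\sigma_m=\sigma_2$ and $1-\sigma_m=\sigma_1$, which makes \eqref{eq:weighted_German_n=1} look even nicer.'' Your derivation spells out precisely this substitution, together with the observation that irrationality of $\theta_{11}$ forces $k=1$ in the final clause of Theorem~\ref{t:weighted_German_n=1}.
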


\begin{remark}
  If, within the hypothesis of Corollary \ref{cor:weighted_German_n=1_m=2}, $\theta_{11}$ is rational, then $\hat\omega=+\infty$, whereas for $\tr\Theta$ the system \eqref{eq:system_with_weights} reduces to a system
  \[\begin{cases}
      |x|\leq t \\
      |\theta x-y|\leq t^{-\gamma\sigma_2}
  \end{cases}\]
  with an irrational $\theta$, which by the argument in the spirit of Proposition \ref{prop:extra_bound} implies that $\tr{\hat\omega}=\sigma_2^{-1}$. Thus, in this case we always have $(\hat\omega,\tr{\hat\omega})=(+\infty,\sigma_2^{-1})$.
\end{remark}

It is very interesting now to analyse a result by A.\,Marnat \cite{marnat_twisted}, who proved the existence of uncountably many $\Theta$ for $n=1$, $m=2$ with prescribed values of $\hat\omega$ and $\tr{\hat\omega}$, showing thus that there is no analogue of Jarn\'{i}k's relation in the weighted case. Namely, he proved that for every positive $a<(3\sigma_1)^{-1}$ and every $b$ satisfying the inequalities
\[\begin{aligned}
    & \sigma_1b+\sigma_2a\geq1, \\
    & \sigma_2b+\sigma_1a\leq1, \\
    & \sigma_1b\leq1,
  \end{aligned}\]
there exist uncountably many $\Theta$ with $\hat\omega=a^{-1}$ and $\tr{\hat\omega}=b$.

Particularly, in the case of irrational $\theta_{11}$, it follows from Marnat's result that for $\hat\omega>3\sigma_1$ the inequalities \eqref{eq:weighted_German_n=1_m=2}, \eqref{eq:weighted_German_n=1_m=2_extra} are sharp. Of course, it would be interesting to prove this fact for every $\hat\omega\geq1$.

\section{Application to inhomogeneous approximation}\label{sec:inhomogeneous}

Another important class of Diophantine problems concerns the inhomogeneous setting. Given $\pmb\eta=(\eta_1,\ldots,\eta_n)\in\R^n$, consider the system
\begin{equation}\label{eq:inhomogeneous_system_with_weights}
  \begin{cases}
    |\vec x|_{\pmb\sigma}\leq t \\
    |\Theta\vec x-\vec y-\pmb\eta|_{\pmb\rho}\leq t^{-\gamma}
  \end{cases}.
\end{equation}

\begin{definition}
  The \emph{inhomogeneous weighted Diophantine exponent} $\omega_{\pmb\sigma,\pmb\rho}(\Theta,\pmb\eta)$ is defined as the supremum of real $\gamma$ such that the system \eqref{eq:inhomogeneous_system_with_weights} admits nonzero solutions in $(\vec x,\vec y)\in\Z^{m+n}$ for some arbitrarily large $t$.
\end{definition}

\begin{definition}
  The \emph{inhomogeneous uniform weighted Diophantine exponent} $\hat\omega_{\pmb\sigma,\pmb\rho}(\Theta,\pmb\eta)$ is defined as the supremum of real $\gamma$ such that the system \eqref{eq:inhomogeneous_system_with_weights} admits nonzero solutions in $(\vec x,\vec y)\in\Z^{m+n}$ for every $t$ large enough.
\end{definition}

In the aforementioned paper \cite{ghosh_marnat_Pisa_2019} S.\,Chow, A.\,Ghosh et al. proved the following inequalities, the `non-weighted' version of which belongs to M.\,Laurent and Y.\,Bugeaud \cite{bugeaud_laurent_2005}:
\begin{equation}\label{eq:weighted_Bugeaud_Laurent}
  \omega_{\pmb\sigma,\pmb\rho}(\Theta,\pmb\eta)\geq
  \frac1{\hat\omega_{\pmb\rho,\pmb\sigma}(\tr\Theta)},\qquad
  \hat\omega_{\pmb\sigma,\pmb\rho}(\Theta,\pmb\eta)\geq
  \frac1{\omega_{\pmb\rho,\pmb\sigma}(\tr\Theta)}.
\end{equation}
Corollaries \ref{cor:weighted_Dyson_inversed} and \ref{cor:weighted_German_inversed} combined with \eqref{eq:weighted_Bugeaud_Laurent} provide the following two results. A similar approach was used in \cite{beresnevich_velani_London_2010} and \cite{ghosh_marnat_Cambridge_2019} in the `non-weighted' case.

\begin{theorem}\label{t:weighted_inhomogeneous_Dyson}
  Set $\omega=\omega_{\pmb\sigma,\pmb\rho}(\Theta)$ and $\hat\omega_{\pmb\eta}=\hat\omega_{\pmb\sigma,\pmb\rho}(\Theta,\pmb\eta)$. Suppose $\omega<(1-\rho_n)^{-1}$.
  Then
  \begin{equation}\label{eq:weighted_inhomogeneous_Dyson}
    \hat\omega_{\pmb\eta}\geq
    \frac{\sigma_m}{\rho_n}\cdot
    \frac{1-(1-\rho_n)\omega}
         {\omega-(1-\sigma_m)}.
  \end{equation}
\end{theorem}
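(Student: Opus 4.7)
The plan is to chain two ingredients that are already in hand: the weighted Bugeaud--Laurent type transference of \eqref{eq:weighted_Bugeaud_Laurent} and the inverted form of Dyson's inequality supplied by Corollary \ref{cor:weighted_Dyson_inversed}. First I would write, by the second inequality in \eqref{eq:weighted_Bugeaud_Laurent} applied with the roles of $(m,n,\pmb\sigma,\pmb\rho,\Theta)$ and $(n,m,\pmb\rho,\pmb\sigma,\tr\Theta)$ swapped, the bound
\[
  \hat\omega_{\pmb\eta}=\hat\omega_{\pmb\sigma,\pmb\rho}(\Theta,\pmb\eta)\geq\frac1{\omega_{\pmb\rho,\pmb\sigma}(\tr\Theta)}=\frac1{\tr\omega}.
\]
So the task reduces to producing an \emph{upper} bound for $\tr\omega$ in terms of $\omega$ under the assumption $\omega<(1-\rho_n)^{-1}$.

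This is exactly what Corollary \ref{cor:weighted_Dyson_inversed} provides:
\[
  \tr\omega\leq\frac{\rho_n}{\sigma_m}\cdot\frac{\omega-(1-\sigma_m)}{1-(1-\rho_n)\omega}.
\]
Before chaining, I would check that the right-hand side is a genuine positive real number. The denominator $1-(1-\rho_n)\omega$ is positive precisely by the standing hypothesis $\omega<(1-\rho_n)^{-1}$; the numerator $\omega-(1-\sigma_m)$ is positive because Minkowski's convex body theorem gives $\omega\geq1>1-\sigma_m$. Taking reciprocals (which reverses the inequality since both sides are positive) and inserting into the Bugeaud--Laurent bound yields
\[
  \hat\omega_{\pmb\eta}\geq\frac1{\tr\omega}\geq\frac{\sigma_m}{\rho_n}\cdot\frac{1-(1-\rho_n)\omega}{\omega-(1-\sigma_m)},
\]
which is \eqref{eq:weighted_inhomogeneous_Dyson}.

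There is really no obstacle here; the statement is a direct corollary. The only point that merits a line of care is the verification of positivity of both factors before inverting, which is why the hypothesis $\omega<(1-\rho_n)^{-1}$ is built into the statement.
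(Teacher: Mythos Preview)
Your proof is correct and follows exactly the paper's approach: combine the second inequality of \eqref{eq:weighted_Bugeaud_Laurent} with Corollary~\ref{cor:weighted_Dyson_inversed} and invert. One minor remark: no swapping of roles is needed when invoking \eqref{eq:weighted_Bugeaud_Laurent}, since the second inequality there is already stated as $\hat\omega_{\pmb\sigma,\pmb\rho}(\Theta,\pmb\eta)\geq 1/\omega_{\pmb\rho,\pmb\sigma}(\tr\Theta)$, which is precisely the bound you use.
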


\begin{theorem}\label{t:weighted_inhomogeneous_German}
  Set $\hat\omega=\hat\omega_{\pmb\sigma,\pmb\rho}(\Theta)$ and $\omega_{\pmb\eta}=\omega_{\pmb\sigma,\pmb\rho}(\Theta,\pmb\eta)$. Suppose $\hat\omega<(1-\rho_n)^{-1}$. Then
  \begin{equation}\label{eq:weighted_inhomogeneous_German}
    \omega_{\pmb\eta}\geq
    \begin{cases}
      \dfrac{1-(1-\rho_n)\hat\omega}{\rho_n}\hskip 11mm\text{ if }\hat\omega\geq\dfrac{1-\sigma_m}{1-\rho_n} \\
      \dfrac{\sigma_m\vphantom{1^{\big|}}}{1-(1-\sigma_m)\hat\omega^{-1}}\hskip 6mm\text{ if }\hat\omega\leq\dfrac{1-\sigma_m}{1-\rho_n}
    \end{cases},
  \end{equation}
  assuming that $(1-\rho_n)^{-1}=+\infty$ for $\rho_n=1$.
\end{theorem}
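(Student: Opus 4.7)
The plan is to reduce the inhomogeneous statement to a homogeneous one via the inhomogeneous transference \eqref{eq:weighted_Bugeaud_Laurent} and then invoke the already inverted form of Theorem \ref{t:weighted_German} recorded as Corollary \ref{cor:weighted_German_inversed}. Concretely, the first inequality in \eqref{eq:weighted_Bugeaud_Laurent} supplies
\[
  \omega_{\pmb\eta} \;\geq\; \frac{1}{\tr{\hat\omega}},
\]
so any upper bound on $\tr{\hat\omega}$ in terms of $\hat\omega$ immediately yields a lower bound on $\omega_{\pmb\eta}$. This is precisely the kind of bound produced by Corollary \ref{cor:weighted_German_inversed}, which is why both hypotheses of that corollary are imposed here.

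Next I would note that the standing assumption $\hat\omega<(1-\rho_n)^{-1}$ is exactly the one needed to invoke Corollary \ref{cor:weighted_German_inversed} and it guarantees positivity of the denominators that will appear after inversion. In the two regimes $\hat\omega\geq(1-\sigma_m)/(1-\rho_n)$ and $\hat\omega\leq(1-\sigma_m)/(1-\rho_n)$ that corollary gives, respectively,
\[
  \tr{\hat\omega} \;\leq\; \frac{\rho_n}{1-(1-\rho_n)\hat\omega}
  \qquad\text{and}\qquad
  \tr{\hat\omega} \;\leq\; \frac{1-(1-\sigma_m)\hat\omega^{-1}}{\sigma_m},
\]
whose reciprocals are exactly the two lower bounds claimed in \eqref{eq:weighted_inhomogeneous_German}. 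The convention $(1-\rho_n)^{-1}=+\infty$ in the case $\rho_n=1$ is handled transparently: the hypothesis then becomes vacuous and only the first alternative survives, matching the statement.

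Since the analytic content has already been carried by Theorem \ref{t:weighted_German} (through Corollary \ref{cor:weighted_German_inversed}) and by the Bugeaud--Laurent-type inequality \eqref{eq:weighted_Bugeaud_Laurent}, no new technical ingredient is needed, and there is no genuine obstacle. The only point that requires a moment of care is the book-keeping of the two cases, namely verifying that the threshold $\hat\omega=(1-\sigma_m)/(1-\rho_n)$ separating the regimes in Corollary \ref{cor:weighted_German_inversed} is preserved after taking reciprocals; this is immediate, because the two curves bounding $\tr{\hat\omega}$ meet at the threshold by construction, so after inversion the two lower bounds for $\omega_{\pmb\eta}$ also agree there and the piecewise formula \eqref{eq:weighted_inhomogeneous_German} is consistent.
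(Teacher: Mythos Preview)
Your proposal is correct and follows exactly the paper's approach: the theorem is obtained by combining the first inequality of \eqref{eq:weighted_Bugeaud_Laurent} with Corollary \ref{cor:weighted_German_inversed} and taking reciprocals. One minor slip: in the edge case $\rho_n=1$ the threshold $(1-\sigma_m)/(1-\rho_n)$ is $+\infty$, so it is the \emph{second} alternative that survives, not the first; this does not affect the argument.
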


Notice that due to the trivial inequalities $\omega_{\pmb\eta}\geq\hat\omega_{\pmb\eta}$, $\omega\geq\hat\omega$ both \eqref{eq:weighted_inhomogeneous_Dyson} and \eqref{eq:weighted_inhomogeneous_German} provide lower estimates for $\omega_{\pmb\eta}$ in terms of $\omega$. One can easily check that the one provided by \eqref{eq:weighted_inhomogeneous_Dyson} is weaker than the one provided by \eqref{eq:weighted_inhomogeneous_German}. However, there is a small disadvantage in the latter caused by the condition on $\hat\omega$. But in the cases $m=1$ and $n=1$ that condition luckily disappears, which turns Theorems \ref{t:weighted_inhomogeneous_Dyson}, \ref{t:weighted_inhomogeneous_German} into the following symmetric statement.

\begin{theorem}\label{t:weighted_inhomogeneous_n=1_or_m=1}
  Let $\omega$, $\hat\omega$, $\omega_{\pmb\eta}$, $\hat\omega_{\pmb\eta}$ be as in Theorems \ref{t:weighted_inhomogeneous_Dyson}, \ref{t:weighted_inhomogeneous_German}.

  \textup{(i)} Suppose $n=1$. Then
  \begin{equation*}
    \hat\omega_{\pmb\eta}\geq
    \frac{\sigma_m}{\omega-(1-\sigma_m)},
    \qquad
    \omega_{\pmb\eta}\geq
    \frac{\sigma_m}{1-(1-\sigma_m)\hat\omega^{-1}}.
  \end{equation*}

  \textup{(ii)} Suppose $m=1$ and $\omega<(1-\rho_n)^{-1}$. Then
  \begin{equation*}
    \hat\omega_{\pmb\eta}\geq
    \frac{\omega^{-1}-(1-\rho_n)}{\rho_n},
    \qquad
    \omega_{\pmb\eta}\geq
    \frac{1-(1-\rho_n)\hat\omega}{\rho_n}.\phantom{11}
  \end{equation*}
\end{theorem}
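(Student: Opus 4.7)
The plan is to derive Theorem \ref{t:weighted_inhomogeneous_n=1_or_m=1} as a direct specialization of Theorems \ref{t:weighted_inhomogeneous_Dyson} and \ref{t:weighted_inhomogeneous_German}, taking advantage of the fact that in both boundary cases $n=1$ and $m=1$, either $\rho_n=1$ or $\sigma_m=1$, which causes several terms to collapse and makes the two splitting cases of \eqref{eq:weighted_inhomogeneous_German} degenerate into a single branch.

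First I would treat part (i). Since $n=1$ forces $\rho_n=\rho_1=1$, the convention $(1-\rho_n)^{-1}=+\infty$ renders the hypothesis $\omega<(1-\rho_n)^{-1}$ of Theorem \ref{t:weighted_inhomogeneous_Dyson} vacuous. Substituting $\rho_n=1$ into \eqref{eq:weighted_inhomogeneous_Dyson} makes the factor $\rho_n$ in the denominator disappear and kills the term $(1-\rho_n)\omega$ in the numerator, immediately yielding $\hat\omega_{\pmb\eta}\geq\sigma_m/(\omega-(1-\sigma_m))$. For Theorem \ref{t:weighted_inhomogeneous_German}, the threshold $(1-\sigma_m)/(1-\rho_n)$ equals $+\infty$, so $\hat\omega$ always falls in the second branch of \eqref{eq:weighted_inhomogeneous_German}, and the hypothesis $\hat\omega<(1-\rho_n)^{-1}$ is again automatic; a direct substitution gives the claimed bound on $\omega_{\pmb\eta}$.

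For part (ii), the symmetric argument applies with $\sigma_m=\sigma_1=1$, so $1-\sigma_m=0$. The hypothesis $\omega<(1-\rho_n)^{-1}$ is given, and for the German bound I would combine it with the trivial $\hat\omega\leq\omega$ to verify the required $\hat\omega<(1-\rho_n)^{-1}$. Substituting $\sigma_m=1$ in \eqref{eq:weighted_inhomogeneous_Dyson} collapses the denominator to $\omega$ and produces $\hat\omega_{\pmb\eta}\geq(\omega^{-1}-(1-\rho_n))/\rho_n$. In \eqref{eq:weighted_inhomogeneous_German}, the threshold $(1-\sigma_m)/(1-\rho_n)$ becomes $0$, so we are now always in the first branch (recall $\hat\omega\geq 1$), and that branch specializes to $\omega_{\pmb\eta}\geq(1-(1-\rho_n)\hat\omega)/\rho_n$.

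There is no real obstacle: the content of the theorem is entirely that the conditional hypotheses of Theorems \ref{t:weighted_inhomogeneous_Dyson} and \ref{t:weighted_inhomogeneous_German} are automatic in the boundary cases, and that the piecewise bound of \eqref{eq:weighted_inhomogeneous_German} reduces to a single formula. The only point worth writing out is the bookkeeping with the conventions $\rho_n=1\Rightarrow(1-\rho_n)^{-1}=+\infty$ and $\sigma_m=1\Rightarrow(1-\sigma_m)/(1-\rho_n)=0$, and the observation that $\hat\omega\leq\omega$ transfers the regular-exponent hypothesis to the uniform one in part (ii).
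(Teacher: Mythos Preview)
Your proposal is correct and is precisely the argument the paper has in mind: the text immediately preceding Theorem~\ref{t:weighted_inhomogeneous_n=1_or_m=1} simply remarks that the hypothesis $\hat\omega<(1-\rho_n)^{-1}$ ``luckily disappears'' in the cases $m=1$ and $n=1$, and leaves the substitution to the reader. Your write-up spells out exactly this specialization, including the use of $\hat\omega\leq\omega$ in part~(ii) to transfer the hypothesis from $\omega$ to $\hat\omega$.
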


\section{Dyson's transference with weights}\label{sec:Dyson_proof}

In this Section we prove Theorem \ref{t:weighted_Dyson}.

\subsection{Mahler's theorem in terms of pseudo-compound paralle\-le\-pi\-peds}\label{sec:Mahler_by_German}

All the Dyson-like transference theorems base upon a phenomenon described in its utmost generality by the classical Mahler theorem on a bilinear form (see \cite{mahler_casopis_linear}, \cite{mahler_casopis_convex}, \cite{cassels_DA}). We believe that from the geometric point of view Mahler's theorem is most vividly formulated in terms of pseudo-compound parallelepipeds and dual lattices. An interested reader can find this interpretation performed in detail in \cite{german_evdokimov} (see also \cite{schmidt_DA} for more information about pseudo-compounds in the context of Mahler's theory). In this Section we simply formulate the corresponding version of Mahler's theorem (Theorem \ref{t:Mahler_by_German} below).

\begin{definition}\label{def:pseudo_compound}
  Given positive numbers $\lambda_1,\ldots,\lambda_d$, consider the parallelepiped
  \[\cP=\Big\{ \vec z=(z_1,\ldots,z_d)\in\R^d \,\Big|\, |z_i|\leq\lambda_i,\ i=1,\ldots,d \Big\}.\]
  We call the parallelepiped
  \[\cP^\ast=\Big\{ \vec z=(z_1,\ldots,z_d)\in\R^d \,\Big|\, |z_i|\leq\frac1{\lambda_i}\prod_{j=1}^d\lambda_j,\ i=1,\ldots,d \Big\}\]
  the \emph{pseudo-compound} of $\cP$.
\end{definition}

We remind that, given a full-rank lattice $\La$ in $\R^d$, its \emph{dual} is defined as
\[\La^\ast=\big\{\, \vec z\in\R^d \,\big|\ \langle\vec z,\vec w\rangle\in\Z\text{ for all }\vec w\in\La \,\big\},\]
where $\langle \,\cdot\,,\cdot\,\rangle$ denotes inner product.

\begin{theorem}[Interpretation of Mahler's theorem] \label{t:Mahler_by_German}
  Let $\cP$ be as in Definition \ref{def:pseudo_compound}. Let $\La$ be a full-rank lattice in $\R^d$, $d\geq2$, $\det\La=1$. Then
  \[\cP^\ast\cap\La^\ast\neq\{\vec 0\}
    \implies
    c\cP\cap\La\neq\{\vec 0\},\]
  where $c=d^{\raisebox{1ex}{$\frac1{2(d-1)}$}}$ and $\vec 0$ denotes the origin.
\end{theorem}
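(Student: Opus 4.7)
The plan is to extract a primitive $\vec w\in\cP^*\cap\La^*\setminus\{\vec 0\}$---any nonzero vector in the intersection becomes primitive after dividing by the gcd of its $\La^*$-coordinates, and stays in $\cP^*$ by central symmetry and convexity---and then to look for the desired point of $c\cP\cap\La$ inside the hyperplane $H=\vec w^{\perp}$ rather than in the whole of $\R^d$. The sublattice $\La\cap H$ has full rank $d-1$ in $H$; primitivity of $\vec w$ together with $\det\La=1$ forces $\mathrm{covol}_{H}(\La\cap H)=\|\vec w\|$, where $\|\cdot\|$ denotes the Euclidean norm (adjacent integer level sets of $\langle\vec w,\cdot\rangle$ are separated by distance $\|\vec w\|^{-1}$). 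The idea is to apply Minkowski's first convex body theorem in $H$ to the symmetric convex body $c\cP\cap H$ against $\La\cap H$: any nonzero lattice point produced lies in $c\cP\cap\La$, which is the conclusion.

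The Minkowski step reduces to the lower bound $\vol_{d-1}(c\cP\cap H)\geq 2^{d-1}\|\vec w\|$, hence to a lower bound on the central section $\vol_{d-1}(\cP\cap H)$. Here I would invoke Vaaler's cube slicing theorem, which asserts that every central hyperplane section of $[-1,1]^d$ has $(d-1)$-volume at least $2^{d-1}$. Push the problem onto the cube via the diagonal map $T=\mathrm{diag}(1/\lambda_1,\ldots,1/\lambda_d)$, which sends $\cP$ to $[-1,1]^d$ and $H$ to $\tilde H=(\vec w^{\ast})^{\perp}$ with $\vec w^{\ast}=(\lambda_1 w_1,\ldots,\lambda_d w_d)$. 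The standard area formula for the image of a hyperplane section under a linear map then gives
\[
  \vol_{d-1}(\cP\cap H)
  \;=\; \Lambda\,\frac{\|\vec w\|}{\|\vec w^{\ast}\|}\,\vol_{d-1}\bigl([-1,1]^d\cap\tilde H\bigr)
  \;\geq\; \frac{2^{d-1}\,\Lambda\,\|\vec w\|}{\|\vec w^{\ast}\|},
\]
with $\Lambda=\lambda_1\cdots\lambda_d$. The hypothesis $\vec w\in\cP^{*}$ reads $\lambda_i|w_i|\leq\Lambda$ for each $i$, so $\|\vec w^{\ast}\|\leq\sqrt{d}\,\Lambda$, and consequently $\vol_{d-1}(\cP\cap H)\geq 2^{d-1}\|\vec w\|/\sqrt{d}$. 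Scaling by $c$ multiplies this by $c^{d-1}$, and the Minkowski threshold $2^{d-1}\|\vec w\|$ is reached precisely when $c^{d-1}\geq\sqrt{d}$, i.e.\ $c\geq d^{1/(2(d-1))}$, which is exactly the constant in the statement.

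The only genuinely non-elementary input is Vaaler's inequality: with just the Brunn--Minkowski concavity of the slicing function $t\mapsto\vol_{d-1}(\cP\cap\{\langle\vec w,\cdot\rangle=t\})$ one would obtain only $\vol_{d-1}(\cP\cap H)\geq 2^{d-1}\|\vec w\|/d$, degrading the constant to $d^{1/(d-1)}$. So the main obstacle is precisely the $\sqrt{d}$-improvement supplied by Vaaler's sharp cube-slicing bound; the change-of-variables formula for $(d-1)$-volumes, the primitivity reduction, and Minkowski's first theorem in $H$ are all routine.
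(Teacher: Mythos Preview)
Your argument is correct. Note, however, that the paper does not actually prove Theorem~\ref{t:Mahler_by_German}: it simply cites \cite{german_evdokimov} for the stated constant and remarks that any constant depending only on $d$ would suffice. So strictly speaking there is no in-paper proof to compare against.

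That said, your approach is precisely the $k=1$ version of the argument the paper \emph{does} give for Theorem~\ref{t:second_pseudo_compound}. There the authors normalise $\cP$ to the unit cube via the diagonal map, translate a nonzero element of $\cP^{\circledast}\cap\La^{\circledast}$ into a rank-$2$ sublattice $\Gamma\subset(T^{-1}\La)^\ast$ of small determinant, invoke Proposition~\ref{prop:orthogonal_sublattices} to obtain the orthogonal rank-$(d-2)$ sublattice $\Gamma^\perp\subset T^{-1}\La$ with $\det\Gamma^\perp=\det\Gamma$, bound the $(d-2)$-dimensional section of the cube by Vaaler's theorem, and finish with Minkowski in the $(d-2)$-dimensional subspace. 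Your primitive vector $\vec w$ plays the role of a rank-$1$ $\Gamma$; your covolume identity $\mathrm{covol}_H(\La\cap H)=\|\vec w\|$ is exactly the $k=1$ case of Proposition~\ref{prop:orthogonal_sublattices}; and the cube-slicing step and Minkowski application are identical in spirit. The only cosmetic difference is that the paper normalises to the cube at the outset, whereas you keep $\cP$ and pass to the cube only to invoke Vaaler---the Jacobian factor $\Lambda\,\|\vec w\|/\|\vec w^{\ast}\|$ you compute is precisely what compensates for this.
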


With the given value of $c$ Theorem \ref{t:Mahler_by_German} was proved in \cite{german_evdokimov}. In Mahler's formulation $c$ equals $d-1$. However, for our purposes any constant depending only on $d$ will do, as we are concerned only with exponents.

\subsection{Dual lattices and two-parametric families of parallelepipeds}\label{sec:dual_lattices_and_parallelepipeds}

Set $d=m+n$. Then, as assumed in the beginning of the paper, $d\geq3$. Define
\[\La=
  \begin{pmatrix}
    \vec I_m & \\
    -\Theta  & \vec I_n
  \end{pmatrix}
  \Z^d.\]
Then the dual lattice is given by
\[\La^\ast=
  \begin{pmatrix}
    \vec I_m & \tr\Theta \\
             & \vec I_n
  \end{pmatrix}
  \Z^d.\]
For each $t>1$, $\gamma\geq1$, $s>1$, $\delta\geq1$ set
\begin{equation*}
\begin{aligned}
  \cP(t,\gamma) & =\Bigg\{\,\vec z=(z_1,\ldots,z_d)\in\R^d \ \Bigg|
                        \begin{array}{l}
                          |(z_1,\ldots,z_m)|_{\pmb\sigma}\leq t \\
                          |(z_{m+1},\ldots,z_d)|_{\pmb\rho}\leq t^{-\gamma}
                        \end{array} \Bigg\}, \\
  \cQ(s,\delta) & =\Bigg\{\,\vec z=(z_1,\ldots,z_d)\in\R^d \ \Bigg|
                        \begin{array}{l}
                          |(z_1,\ldots,z_m)|_{\pmb\sigma}\leq s^{-\delta} \\
                          |(z_{m+1},\ldots,z_d)|_{\pmb\rho}\leq s
                        \end{array} \Bigg\}.
\end{aligned}
\end{equation*}
We can reformulate Definition \ref{def:ordinary_weighted} in the following way.

\begin{proposition}\label{prop:regular_omega_def_reformulation}
  \begin{equation*}
  \begin{aligned}
    \omega_{\pmb\sigma,\pmb\rho}(\Theta) & =\sup\Big\{ \gamma\geq1 \,\Big|\, \text{there is $t$, however large, s.t. }\cP(t,\gamma)\cap\La\neq\{\vec 0\} \Big\}, \\
    \omega_{\pmb\rho,\pmb\sigma}(\tr\Theta) & =\sup\Big\{ \delta\geq1 \,\Big|\, \text{there is $s$, however large, s.t. }\cQ(s,\delta)\cap\La^\ast\neq\{\vec 0\} \Big\}.
  \end{aligned}
  \end{equation*}
\end{proposition}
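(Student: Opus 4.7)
The plan is to unravel both definitions and observe that the system \eqref{eq:system_with_weights} describes exactly the condition that a lattice point lies in the corresponding parallelepiped; so the proposition is essentially a dictionary between the analytic language of linear forms and the geometry-of-numbers language of lattices.

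First I would parametrize $\La$ explicitly: every vector of $\La$ has the form $(\vec x,\vec y-\Theta\vec x)$ for a unique $(\vec x,\vec y)\in\Z^{m+n}$, and this parametrization is a bijection sending the origin to the origin, so it preserves non-vanishing. Using that the weighted norm $|\cdot|_{\pmb\rho}$ is even, such a point lies in $\cP(t,\gamma)$ if and only if $|\vec x|_{\pmb\sigma}\leq t$ and $|\Theta\vec x-\vec y|_{\pmb\rho}\leq t^{-\gamma}$, i.e. exactly when $(\vec x,\vec y)$ solves \eqref{eq:system_with_weights}. Taking the supremum over the admissible $\gamma$ then yields the first formula. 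To justify restricting the sup to $\gamma\geq1$, I would invoke Minkowski's convex body theorem, which, as already noted in the paper, gives $\omega_{\pmb\sigma,\pmb\rho}(\Theta)\geq1$, so this restriction does not change the value.

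For the second formula, I would parametrize $\La^\ast$ by $(\vec x+\tr\Theta\vec y,\vec y)$ with $(\vec x,\vec y)\in\Z^{m+n}$, again a sign-preserving bijection. Applying the same argument and using that $\vec x\mapsto-\vec x$ is a bijection of $\Z^m$, the condition that such a point lies in $\cQ(s,\delta)$ becomes $|\tr\Theta\vec y-\vec x|_{\pmb\sigma}\leq s^{-\delta}$ and $|\vec y|_{\pmb\rho}\leq s$. This is precisely the system \eqref{eq:system_with_weights} with $(m,n,\pmb\sigma,\pmb\rho,\Theta)$ replaced by $(n,m,\pmb\rho,\pmb\sigma,\tr\Theta)$, with parameter $t=s$ and exponent $\gamma=\delta$. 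Hence the supremum over $\delta\geq1$ equals $\omega_{\pmb\rho,\pmb\sigma}(\tr\Theta)$.

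There is no genuine obstacle here. The only points requiring attention are the slight notational asymmetry between $\Theta\vec x-\vec y$ in $\La$ and $\tr\Theta\vec y-\vec x$ in $\La^\ast$, which is absorbed into the sign substitution mentioned above, and the verification that the bound $\gamma,\delta\geq1$ in the suprema costs nothing, which is a direct consequence of Minkowski's theorem.
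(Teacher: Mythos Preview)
Your argument is correct and is exactly the unpacking the paper has in mind; in fact the paper does not supply a proof for this proposition at all, treating it as an immediate reformulation of Definition~\ref{def:ordinary_weighted} once the lattices $\La$, $\La^\ast$ and the parallelepipeds $\cP(t,\gamma)$, $\cQ(s,\delta)$ have been introduced. Your explicit parametrizations of $\La$ and $\La^\ast$, together with the sign substitution and the appeal to Minkowski for the harmlessness of the restriction $\gamma,\delta\geq1$, make precise what the paper leaves to the reader.
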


Now, the preparations are complete, and we are ready to prove Theorem \ref{t:weighted_Dyson}. 

\subsection{Proof of Theorem \ref{t:weighted_Dyson}}\label{sec:twisted_Dyson_proof_itself}

For every $t,\gamma\in\R$ such that $t>1$, $1\leq\gamma<(1-\rho_n)^{-1}$ set
\begin{equation}\label{eq:s_and_delta_via_t_and_gamma}
  s=t^{\gamma-(\gamma-1)\rho_n^{-1}},\qquad
  \delta=\frac{1+(\gamma-1)\sigma_m^{-1}}{\gamma-(\gamma-1)\rho_n^{-1}}.
\end{equation}
Then
\[\begin{aligned}
    s^{-\delta\sigma_j} & \leq t^{-\sigma_j+1-\gamma},\qquad j=1,\ldots,m, \\
    s^{\rho_i} & \leq t^{\gamma\rho_i+1-\gamma},\qquad\,\ i=1,\ldots,n,
  \end{aligned}\]
whence
\begin{equation}\label{eq:Q_subset_P}
  \cQ(s,\delta)\subseteq
  \cP(t,\gamma)^\ast\ ,
\end{equation}
\begin{equation*}
  \cP(t,\gamma)^\ast=\Bigg\{\,\vec z=(z_1,\ldots,z_d)\in\R^d \ \Bigg|
                             \begin{array}{lr}
                               |z_j|\leq t^{-\sigma_j+1-\gamma}, & j=1,\ldots,m \\
                               |z_{m+i}|\leq t^{\gamma\rho_i+1-\gamma}, & i=1,\ldots,n
                             \end{array} \Bigg\}.
\end{equation*}
Combining \eqref{eq:Q_subset_P} with Theorem \ref{t:Mahler_by_German}, we get the key relation
\begin{equation}\label{eq:if_Q_is_nonempty_then_so_is_P}
  \cQ(s,\delta)\cap\La^\ast\neq\{\vec 0\}
  \implies
  c\cP(t,\gamma)\cap\La\neq\{\vec 0\}.
\end{equation}

The assumption $1\leq\gamma<(1-\rho_n)^{-1}$ guarantees that the correspondence $\gamma\mapsto\delta$ given by \eqref{eq:s_and_delta_via_t_and_gamma} generates a one-to-one monotonous mapping $[1,(1-\rho_n)^{-1})\to[1,+\infty)$. Particularly, $s$ and $t$ tend to $+\infty$ simultaneously, and $\gamma$ can be expressed as a function of $\delta$.

Thus, in view of Proposition \ref{prop:regular_omega_def_reformulation}, \eqref{eq:if_Q_is_nonempty_then_so_is_P} implies that
\begin{equation}\label{eq:geq_delta_implies_geq_gamma}
  \omega_{\pmb\rho,\pmb\sigma}(\tr\Theta)\geq\delta
  \implies
  \omega_{\pmb\sigma,\pmb\rho}(\Theta)\geq\gamma=
  \frac{\big(\sigma_m^{-1}-1\big)+\rho_n^{-1}\delta}
       {\sigma_m^{-1}+\big(\rho_n^{-1}-1\big)\delta}.
\end{equation}
Hence
\[\omega_{\pmb\sigma,\pmb\rho}(\Theta)\geq
  \frac{\big(\sigma_m^{-1}-1\big)+\rho_n^{-1}\omega_{\pmb\rho,\pmb\sigma}(\tr\Theta)}
       {\sigma_m^{-1}+\big(\rho_n^{-1}-1\big)\omega_{\pmb\rho,\pmb\sigma}(\tr\Theta)}.\]
Swapping $(\pmb\sigma,\pmb\rho,\Theta)$ for $(\pmb\rho,\pmb\sigma,\tr\Theta)$ gives \eqref{eq:weighted_Dyson}. Theorem \ref{t:weighted_Dyson} is proved.

It is clear that \eqref{eq:if_Q_is_nonempty_then_so_is_P} also provides an analogue of \eqref{eq:weighted_Dyson} for uniform exponents, but there is no need for such an analogue, as we are about to prove a stronger statement, namely, Theorem \ref{t:weighted_German}.

\section{Uniform transference with weights}\label{sec:German_proof}

In this Section we prove Theorem \ref{t:weighted_German}.

\subsection{Analogue of Theorem \ref{t:Mahler_by_German} for second pseudo-compounds}

As we noticed in the beginning of Section \ref{sec:Mahler_by_German}, Theorem \ref{t:Mahler_by_German} is the core of any transference theorem for regular exponents. But if we want to prove something about uniform exponents, we must use a more delicate tool. In this Section we propose an analogue of Theorem \ref{t:Mahler_by_German} dealing with pairs of lattice points (Theorem \ref{t:second_pseudo_compound} below). The idea of this approach was used by the author in \cite{german_MJCNT_2012} to prove \eqref{eq:German}.

Let $\vec e_1,\ldots,\vec e_d$ be the standard basis of $\R^d$. Let us associate to each $\vec Z\in\bigwedge^2\R^d$ its representation
\[\vec Z=\sum_{1\leq i<j\leq d}Z_{ij}\vec e_i\wedge\vec e_j.\]

\begin{definition}\label{def:second_pseudo_compound}
  Given positive numbers $\lambda_1,\ldots,\lambda_d$, consider the parallelepiped
  \[\cP=\Big\{ \vec z=(z_1,\ldots,z_d)\in\R^d \,\Big|\, |z_i|\leq\lambda_i,\ i=1,\ldots,d \Big\}.\]
  We call the parallelepiped
  \[\cP^\circledast=\bigg\{ \vec Z 
                                   \in{\textstyle\bigwedge^2\R^d} \,\bigg|\,
                             |Z_{ij}|\leq\frac1{\lambda_i\lambda_j}\prod_{k=1}^d\lambda_k,\ 1\leq i<j\leq d \bigg\}\]
  the \emph{second pseudo-compound} of $\cP$.
\end{definition}

\begin{remark}
  Our terminology differs a bit from that which W.\,M.\,Schmidt uses in his exposition of Mahler's theory in \cite{schmidt_DA}. Instead of $\cP^\ast$ and $\cP^\circledast$ he actually considers $\star\cP^\ast$ and $\star\cP^\circledast$ -- the images of $\cP^\ast$ and $\cP^\circledast$ under the action of the Hodge star operator. Respectively, he calls them the $(d-1)$-th and the $(d-2)$-th pseudo-compounds of $\cP$. It agrees well with Mahler's definition of compound bodies \cite{mahler_compound_bodies_I}, \cite{mahler_compound_bodies_II}, but in our context
  it seems more appropriate to omit the Hodge star and reverse the numeration order.
\end{remark}

Given a full-rank lattice $\La$ in $\R^d$ and its dual $\La^\ast$, let us denote by $\La^{\circledast}$ the set of decomposable elements of the lattice $\bigwedge^2\La^\ast$, i.e.
\begin{equation*}
  \La^{\circledast}=\Big\{ \vec z_1\wedge\vec z_2 \,\Big|\, \vec z_1,\vec z_2\in\La^\ast \Big\}.
\end{equation*}

\begin{theorem}\label{t:second_pseudo_compound}
  Let $\cP$ be as in Definition \ref{def:second_pseudo_compound}. Let $\La$ be a full-rank lattice in $\R^d$, $\det\La=1$. Then
  \[\cP^{\circledast}\cap\La^{\circledast}\neq\{\vec 0\}
    \implies
    c'\cP\cap\La\neq\{\vec 0\},\]
  where $c'=\big(\frac{d(d-1)}2\big)^{\raisebox{1ex}{$\frac1{2(d-2)}$}}$ and $\vec 0$ denotes the origin.
\end{theorem}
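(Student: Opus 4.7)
I would imitate the proof of Mahler's theorem in its pseudo-compound formulation (Theorem~\ref{t:Mahler_by_German}), but working with the $2$-plane $L=\spanned(\vec z_1,\vec z_2)$ and its $(d-2)$-dimensional orthogonal complement $L^\perp$ in place of a hyperplane. Given a nonzero decomposable bivector $\vec Z=\vec z_1\wedge\vec z_2\in\cP^{\circledast}\cap\La^{\circledast}$, with $\vec z_1,\vec z_2\in\La^\ast$, the sublattice $\Lambda_0=\La\cap L^\perp$ has rank $d-2$. Splitting a basis of $\La$ adapted to the decomposition $\R^d=L^\perp\oplus L$ and using the reciprocal-dual-covolume relation $\covol(\pi_L(\La))\cdot\covol(\La^\ast\cap L)=1$ inside $L$, one obtains
\[
  \covol_{L^\perp}(\Lambda_0)=\covol_{L}(\La^\ast\cap L)=\frac{|\vec Z|_2}{k},
\]
where $k\geq 1$ is the index of $\Z\vec z_1+\Z\vec z_2$ in $\La^\ast\cap L$ and $|\cdot|_2$ is the Euclidean norm on $\bigwedge^2\R^d$; in particular $\covol_{L^\perp}(\Lambda_0)\leq|\vec Z|_2$.

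Next I would estimate $\vol_{d-2}(\cP\cap L^\perp)$ from below. Writing $\cP=T(C_d)$ with $C_d=[-1,1]^d$ and $T=\operatorname{diag}(\lambda_1,\ldots,\lambda_d)$, the section $\cP\cap L^\perp$ is the image under $T$ of the central cube slice $C_d\cap T^{-1}L^\perp$, on which Vaaler's cube-slice theorem yields $\vol_{d-2}(C_d\cap T^{-1}L^\perp)\geq 2^{d-2}$. Computing the Jacobian $J$ of $T\colon T^{-1}L^\perp\to L^\perp$ in an orthonormal basis of $L^\perp$ via the Cauchy--Binet identity, and then using Hodge duality $p'_{\{1,\ldots,d\}\setminus\{i,j\}}=\pm p_{ij}$ between the Plücker coordinates of $L^\perp$ and of $L$, gives the explicit formula
\[
  J=\frac{V\,|\vec Z|_2}{\sqrt{\sum_{i<j}Z_{ij}^{2}\lambda_i^2\lambda_j^2}},
  \qquad V=\prod_{k=1}^{d}\lambda_k.
\]
The hypothesis $\vec Z\in\cP^{\circledast}$ reads precisely $|Z_{ij}|\lambda_i\lambda_j\leq V$ for all $i<j$, so the sum of $\binom{d}{2}$ squares in the denominator is at most $\binom{d}{2}V^2$, whence
\[
  \vol_{d-2}(\cP\cap L^\perp)\geq J\cdot 2^{d-2}\geq\frac{2^{d-2}|\vec Z|_2}{\sqrt{\binom{d}{2}}}.
\]

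Minkowski's first theorem applied to $\Lambda_0$ inside $c'\cP\cap L^\perp$ then produces a nonzero element of $\La\cap c'\cP$ as soon as $(c')^{d-2}\vol_{d-2}(\cP\cap L^\perp)\geq 2^{d-2}\covol_{L^\perp}(\Lambda_0)$, which by the two estimates above reduces to $(c')^{d-2}\geq\sqrt{\binom{d}{2}}$, matching the claimed $c'=\bigl(d(d-1)/2\bigr)^{1/(2(d-2))}$. I expect the delicate point to be exactly the volume lower bound: the naive route via Brunn's slicing inequality $\vol_d(\cP)\leq\vol_2(\pi_L(\cP))\cdot\vol_{d-2}(\cP\cap L^\perp)$ combined with the zonotope formula $\vol_2(\pi_L(\cP))=4\sum_{i<j}|p_{ij}|\lambda_i\lambda_j$ only yields $c'\leq\binom{d}{2}^{1/(d-2)}$, the square of the desired constant. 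Extracting the missing square root is precisely what forces the Vaaler/Cauchy--Binet combination, whose role is to convert the $L^\infty$-type constraint $|Z_{ij}|\lambda_i\lambda_j\leq V$ into an $L^2$-type bound on the geometry of $L^\perp$.
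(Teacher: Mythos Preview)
Your argument is correct and follows essentially the same route as the paper's proof: both use the orthogonal-sublattice covolume identity (the paper's Proposition~\ref{prop:orthogonal_sublattices}), Vaaler's cube-slice theorem, and Minkowski's first theorem, combined with the trivial $\ell^\infty$-to-$\ell^2$ loss of $\sqrt{\binom{d}{2}}$. The only difference is presentational: the paper first rescales by $T=\textup{diag}(\lambda_i/v)$ with $v=\prod_k\lambda_k^{1/d}$ so that $\cP$ becomes a cube $v\cB$ and $\cP^{\circledast}$ becomes $v^{d-2}\cB^{\circledast}$ (whence the bound $\det\Gamma\leq v^{d-2}\cdot\tfrac12\diam\cB^{\circledast}$ is immediate), whereas you keep the original coordinates and instead track the Jacobian of $T|_{T^{-1}L^\perp}$ explicitly via Cauchy--Binet and Hodge duality---your formula $J=V|\vec Z|_2\big/\sqrt{\sum_{i<j}Z_{ij}^2\lambda_i^2\lambda_j^2}$ is exactly what the paper's normalisation encodes.
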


Proof of Theorem \ref{t:second_pseudo_compound} is based on three facts. The first one is Minkowski's convex body theorem, the second one is Vaaler's theorem \cite{vaaler}, which states that the $k$-dimensional volume of any $k$-dimensional central section of a unit cube is not less than $1$, and the third one is the following observation.

\begin{proposition}\label{prop:orthogonal_sublattices}
  Let $\La$ be a full-rank lattice in $\R^d$, $\det\La=1$. Let $\cL$ be a $k$-dimensional subspace of $\R^d$. Suppose $\Gamma=\cL\cap\La$ has rank $k$. Consider the orthogonal complement $\cL^\perp$ and denote $\Gamma^\perp=\cL^\perp\cap\La^\ast$. Then $\Gamma^\perp$ has rank $d-k$ and
  \[\det\Gamma^\perp=\det\Gamma.\]
\end{proposition}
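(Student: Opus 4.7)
The plan is to reduce the statement to the familiar fact that a lattice and its dual have reciprocal covolumes, applied to the projection of $\La$ onto $\cL^\perp$.

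First I would introduce the orthogonal projection $\pi\colon\R^d\to\cL^\perp$ and show that $\pi(\La)$ is a full-rank lattice in $\cL^\perp$ of covolume $1/\det\Gamma$. The kernel of $\pi|_\La$ is $\cL\cap\La=\Gamma$, which has rank $k$ by hypothesis; moreover $\La/\Gamma$ embeds into $\R^d/\cL\cong\cL^\perp$, so it is torsion-free. Hence $\Gamma$ is a primitive (pure) sublattice of $\La$, one can extend a basis of $\Gamma$ to a basis of $\La$, and the images of the remaining $d-k$ basis vectors under $\pi$ form a basis of $\pi(\La)$. The base-times-height decomposition $\det\La=\det\Gamma\cdot\det\pi(\La)$, combined with $\det\La=1$, gives $\det\pi(\La)=1/\det\Gamma$.

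Next I would identify $\Gamma^\perp$ with the dual lattice of $\pi(\La)$ inside the ambient Euclidean space $\cL^\perp$. The key observation is that for every $\vec z\in\cL^\perp$ and every $\vec w\in\R^d$ one has $\langle\vec z,\vec w\rangle=\langle\vec z,\pi(\vec w)\rangle$, since $\vec z\perp\cL$. Therefore $\vec z\in\cL^\perp\cap\La^\ast$ if and only if $\vec z\in\cL^\perp$ and $\langle\vec z,\vec u\rangle\in\Z$ for all $\vec u\in\pi(\La)$, which is precisely the definition of the dual of $\pi(\La)$ as a lattice in $\cL^\perp$. In particular $\Gamma^\perp$ is a full-rank lattice in $\cL^\perp$, establishing the rank claim.

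Finally, applying the standard reciprocity $\det\Gamma^\perp\cdot\det\pi(\La)=1$ for mutually dual lattices in the $(d-k)$-dimensional Euclidean space $\cL^\perp$ and substituting $\det\pi(\La)=1/\det\Gamma$ yields $\det\Gamma^\perp=\det\Gamma$, as required. The only subtle point is verifying that $\pi(\La)$ really is discrete and of full rank in $\cL^\perp$; this is what the hypothesis on the rank of $\Gamma$ buys us, via the primitivity argument above, and everything else is routine bookkeeping with covolumes.
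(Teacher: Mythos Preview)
Your argument is correct and complete: the identification of $\Gamma^\perp$ with the dual of the projected lattice $\pi(\La)$ inside $\cL^\perp$ is exactly right, the base-times-height formula $\det\La=\det\Gamma\cdot\det\pi(\La)$ holds because $\Gamma$ is primitive and spans $\cL$, and the reciprocity $\det\Gamma^\perp\cdot\det\pi(\La)=1$ finishes the job. The discreteness of $\pi(\La)$ is indeed precisely what the rank hypothesis on $\Gamma$ guarantees.

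The paper takes a different, much terser route: it does not give a self-contained argument at all, but observes that because $\det\La=1$ one may reduce ``by linearity'' to the case $\La=\Z^d$, and then simply cites Schmidt's lecture notes and the author's earlier paper for that classical case. Your proof is more informative in that it actually exhibits the mechanism (projection duality) that makes the statement true, and it works uniformly without first normalising the lattice; the paper's approach buys brevity at the cost of deferring the real content to external references. Either is acceptable here, since the proposition is used only as a black box in the proof of Theorem~\ref{t:second_pseudo_compound}.
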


Since the lattice is assumed to be unimodular, Proposition \ref{prop:orthogonal_sublattices} by linearity reduces to the case $\La=\Z^d$, which seems to be a rather classical statement. The corresponding proof can be found at least in \cite{schmidt_DADE} and \cite{german_MJCNT_2012}.

\begin{proof}[Proof of Theorem \ref{t:second_pseudo_compound}]
  Set
  \[v=\frac12(\vol\cP)^{1/d}=\prod_{k=1}^d\lambda_k^{1/d}.\]
  Consider the diagonal matrix $T=\textup{diag}(\lambda_1/v,\ldots,\lambda_d/v)$. Then $T^{-1}\cP=v\cB$, where
  \[\cB=\Big\{ \vec z=(z_1,\ldots,z_d)\in\R^d \,\Big|\, |z_i|\leq1,\ i=1,\ldots,d \Big\}.\]
  As $(T^{-1}(\La))^\ast=\tr T(\La^\ast)=T(\La^\ast)$, the second compound matrix $T^{(2)}$ is acting on $\bigwedge^2\R^d$ thought of as the ambient space for $\bigwedge^2\La^\ast$.
%
%
  Since
  \[T^{(2)}(\cP^\circledast)=v^{d-2}\cB^\circledast,\]
  where
  \[\cB^\circledast=\bigg\{ \vec Z 
                                   \in{\textstyle\bigwedge^2\R^d} \,\bigg|\,
                             |Z_{ij}|\leq1,\ 1\leq i<j\leq d \bigg\},\]
  we are to show that
  \begin{equation}\label{eq:second_pseudo_compound_for_cubes}
    (v^{d-2}\cB^\circledast)\cap(T^{-1}\La)^\circledast\neq\{\vec 0\}
    \implies
    (c'v\cB)\cap(T^{-1}\La)\neq\{\vec 0\}.
  \end{equation}
  Now, the left hand side of \eqref{eq:second_pseudo_compound_for_cubes} implies that there is a sublattice $\Gamma$ in $(T^{-1}\La)^\ast$ of rank $2$ with
  \[\det\Gamma\leq v^{d-2}\frac{\diam\cB^\circledast}2=v^{d-2}\bigg(\frac{d(d-1)}2\bigg)^{1/2}=(c'v)^{d-2}.\]
  The determinant of $(T^{-1}\La)^\ast$ equals $1$, so by Proposition \ref{prop:orthogonal_sublattices} there is a sublattice $\Gamma^\perp$ in $T^{-1}\La$ of rank $d-2$ with
  \[\det\Gamma^\perp=\det\Gamma\leq(c'v)^{d-2}.\]
  By Vaaler's theorem the $(d-2)$-dimensional volume of $\cS=\spanned_\R(\Gamma^\perp)\cap(c'v\cB)$ is not less than $(2c'v)^{d-2}$. Applying Minkowski's convex body theorem, we get that there is a nonzero point of $\Gamma^\perp$ in $\cS$, which completes the proof.
\end{proof}

\subsection{`Nodes' and `leaves': main parametric construction}\label{sec:nodes_and_leaves}

Let us adopt the notation of Section \ref{sec:dual_lattices_and_parallelepipeds}. Our proof of Theorem \ref{t:weighted_German} bases upon a construction involving parallelepipeds $\cQ(\,\cdot\ ,\,\cdot\,)$ defined in Section \ref{sec:dual_lattices_and_parallelepipeds}. In this Section we describe this construction.

Let us fix arbitrary $s,\delta,\alpha\in\R$ such that
\[s>1,\quad\delta\geq1,\quad1\leq\alpha\leq\delta,\]
and denote
\[S=s^{\delta/\alpha}.\]
To each $r>1$ let us associate the parallelepiped
\begin{equation*}
  \cQ_r =\cQ\big(r,\alpha\tfrac{\log(sS/r)}{\log r}\big)
        =\Bigg\{\,\vec z 
                  \in\R^d \ \Bigg|
                  \begin{array}{l}
                    |(z_1,\ldots,z_m)|_{\pmb\sigma}\leq (sS/r)^{-\alpha} \\
                    |(z_{m+1},\ldots,z_d)|_{\pmb\rho}\leq r
                  \end{array} \Bigg\}.
\end{equation*}
Consider the following three families of parallelepipeds:
\begin{equation*}
\begin{aligned}
  \mathfrak S & =\mathfrak S(s,\delta,\alpha)=\Big\{ \cQ_r \,\Big|\, s\leq r\leq\sqrt{sS} \Big\}, \\
  \mathfrak A & =\mathfrak A(s,\delta,\alpha)=\Big\{ \cQ_r \,\Big|\, \sqrt{sS}\leq r\leq S \Big\}, \\
  \mathfrak L & =\mathfrak L(s,\delta,\alpha)=\Big\{ \cQ(r,\alpha) \,\Big|\, s\leq r\leq S \Big\}.
\end{aligned}
\end{equation*}
Let us call $\mathfrak S$ the \emph{`stem' family}, $\mathfrak A$ the \emph{`anti-stem' family}, $\mathfrak L$ the \emph{`leaves' family}. Let us call each element of $\mathfrak S$ a \emph{`node'}, each element of $\mathfrak A$ an \emph{`anti-node'}, each element of $\mathfrak L$ a \emph{`leaf'}.

We say that a `node' or an `anti-node' $\cQ_r$ \emph{produces} a `leaf' $\cQ(r',\alpha)$ if
\begin{equation*}
  r'=r\quad\text{ or }\quad r'=sS/r.
\end{equation*}

\begin{proposition}\label{prop:leaves_production}
  \indent

  \textup{(i)}
  If $r<r'$, then $\cQ_r\subset\cQ_{r'}$. For the root `node' $\cQ_s$ we have $\cQ_s=\cQ(s,\delta)$.

  \textup{(ii)}
  For each $r\in\R$, $s\leq r\leq\sqrt{sS}$, the `node' $\cQ_r$ and the `anti-node' $\cQ_{sS/r}$ produce exactly two `leaves'
  \[\cQ(r,\alpha)
    \quad\text{ and }\quad
    \cQ(sS/r,\alpha),\]
  whose intersection is the `node' and whose union is contained in the `anti-node'.

  \textup{(iii)}
  Each `leaf' $\cQ(r,\alpha)$ is produced by exactly one `node' $\cQ_{r'}$ and one `anti-node' $\cQ_{sS/r'}$ with
  \begin{equation*}
    r'=
    \begin{cases}
      r,\qquad\ \,\text{ if }r\leq\sqrt{sS} \\
      sS/r,  \quad\text{ if }r\geq\sqrt{sS}
    \end{cases}.
  \end{equation*}
\end{proposition}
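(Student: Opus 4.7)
My plan is that all three parts of the proposition follow by direct unpacking of the definitions of $\cQ_r$, $\cQ(r,\alpha)$, and $\cQ(s,\delta)$, together with the identity $S^{\alpha}=s^{\delta}$ (coming from $S=s^{\delta/\alpha}$) and the elementary observation that for $r\in[s,\sqrt{sS}]$ one has $r\leq sS/r$, while for $r\in[\sqrt{sS},S]$ one has $r\geq sS/r$.

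For part (i), I would note that the two defining half-axes of $\cQ_r$, namely $(sS/r)^{-\alpha}$ for the $\pmb\sigma$-block and $r$ for the $\pmb\rho$-block, are both non-decreasing functions of $r$; hence $r<r'$ forces each inequality defining $\cQ_r$ to be strictly tighter than the corresponding inequality defining $\cQ_{r'}$, giving $\cQ_r\subset\cQ_{r'}$. Evaluating at $r=s$ yields $\pmb\sigma$-half-axis $(sS/s)^{-\alpha}=S^{-\alpha}=s^{-\delta}$ and $\pmb\rho$-half-axis $s$, which is exactly the defining pair of $\cQ(s,\delta)$.

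For part (ii), I fix $r$ with $s\leq r\leq\sqrt{sS}$ (so that $\cQ_r$ is a `node' and $\cQ_{sS/r}$ is an `anti-node') and apply the production rule with parameters $r$ and $sS/r$ to both $\cQ_r$ and $\cQ_{sS/r}$. In each case the two admissible values of $r'$ are $r$ and $sS/r$, so the two produced `leaves' are $\cQ(r,\alpha)$ and $\cQ(sS/r,\alpha)$. Since $r\leq sS/r$, the intersection of these leaves is cut out by $|(z_1,\ldots,z_m)|_{\pmb\sigma}\leq\min(r^{-\alpha},(sS/r)^{-\alpha})=(sS/r)^{-\alpha}$ and $|(z_{m+1},\ldots,z_d)|_{\pmb\rho}\leq\min(r,sS/r)=r$, which is exactly $\cQ_r$; and any point in the union satisfies $|(z_1,\ldots,z_m)|_{\pmb\sigma}\leq\max(r^{-\alpha},(sS/r)^{-\alpha})=r^{-\alpha}$ and $|(z_{m+1},\ldots,z_d)|_{\pmb\rho}\leq\max(r,sS/r)=sS/r$, which is exactly $\cQ_{sS/r}$.

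For part (iii), a `leaf' $\cQ(r,\alpha)$ is produced by some $\cQ_{r'}$ if and only if $r'=r$ or $r'=sS/r$, so the only possible producers are $\cQ_{r}$ and $\cQ_{sS/r}$. Deciding which of these is the `node' and which the `anti-node' is a matter of comparing $r$ with $\sqrt{sS}$: if $r\leq\sqrt{sS}$ then $\cQ_r\in\mathfrak S$ and $\cQ_{sS/r}\in\mathfrak A$, while if $r\geq\sqrt{sS}$ the roles interchange, giving the stated formula for $r'$. There is no genuine obstacle anywhere in this argument; the only thing that must be handled with a line of care is the boundary case $r=\sqrt{sS}$, where `node' and `anti-node' coincide and the two produced leaves coincide as well, so the statement degenerates but remains formally correct.
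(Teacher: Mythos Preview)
Your proposal is correct and follows exactly the approach of the paper, which simply states that all three parts follow directly from the definition of $\cQ_r$ and the definition of \emph{producing}; you have merely written out those direct verifications in detail.
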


\begin{proof}
  All three statements follow directly from the definition of $\cQ_r$ and the definition of \emph{producing}.
\end{proof}

We illustrate Proposition \ref{prop:leaves_production} by Figure \ref{fig:nodes_and_leaves}, where we use $u$ and $v$ to denote $|(z_{m+1},\ldots,z_d)|_{\pmb\rho}$ and $|(z_1,\ldots,z_m)|_{\pmb\sigma}$ respectively.

\begin{figure}[h]
\centering
\begin{tikzpicture}[scale=5]
  \fill[blue!10!] (0,0.5*1.5^1.5) -- (1.5,0.5*1.5^1.5) -- (1.5,0) -- (0,0) -- cycle;
  \fill[blue!20!] (0,0.5*2^1.5/3^1.5) -- (1.5,0.5*2^1.5/3^1.5) -- (1.5,0) -- (0,0) -- cycle;
  \fill[blue!20!] (0,0.5*1.5^1.5) -- (2/3,0.5*1.5^1.5) -- (2/3,0) -- (0,0) -- cycle;
  \fill[blue!30!] (0,0.5*2^1.5/3^1.5) -- (2/3,0.5*2^1.5/3^1.5) -- (2/3,0) -- (0,0) -- cycle;

  \draw[->,>=stealth'] (-0.2,0) -- (2.6,0) node[right] {$u$};
  \draw[->,>=stealth'] (0,-0.15) -- (0,0.5*3.4) node[above] {$v$};

  \draw[color=black] plot[domain=0:2.2] (\x, 0.5*\x^1.5) node[above right]{$v=\big(sS/u\big)^{-\alpha}$};
  \draw[color=black] plot[domain=0.45:2.2] (\x, 0.5/\x^1.5) node[above right]{$v=u^{-\alpha}$};

  \draw[color=black] (-0.02,0.5*2^1.5) -- (2,0.5*2^1.5) -- (2,-0.02);
  \draw[color=black] (-0.02,0.5*0.5^1.5) -- (0.5,0.5*0.5^1.5) -- (0.5,-0.02);
  \draw[color=black] (-0.02,0.5*2^1.5/3^1.5) -- (1.5,0.5*2^1.5/3^1.5);
  \draw[color=black] (2/3,0.5*1.5^1.5) -- (2/3,-0.02);
  \draw[color=black] (-0.02,0.5*1.5^1.5) -- (1.5,0.5*1.5^1.5) -- (1.5,-0.02);

  \draw[color=black,dashed] (0.5,0.5*2^1.5) -- (0.5,0.5*0.5^1.5) -- (2,0.5*0.5^1.5);
  \draw[color=black,dashed] (1,-0.02) -- (1,0.5);

  \draw (0.5,-0.02) node[below]{$s$};
  \draw (2/3,-0.02) node[below]{$r$};
  \draw (1,-0.02) node[below]{$\sqrt{sS}$};
  \draw (1.5,-0.02) node[below]{$sS/r$};
  \draw (2,-0.02) node[below]{$S$};

  \draw (-0.02,0.5*0.5^1.5) node[left]{$s^{-\delta}$};
  \draw (-0.02,0.5*2^1.5/3^1.5) node[left]{$(sS/r)^{-\alpha}$};
  \draw (-0.02,0.5*1.5^1.5) node[left]{$r^{-\alpha}$};
  \draw (-0.02,0.5*2^1.5) node[left]{$s^{-\alpha}$};
\end{tikzpicture}
\caption{A `node', its `anti-node', and their `leaves'} \label{fig:nodes_and_leaves}
\end{figure}
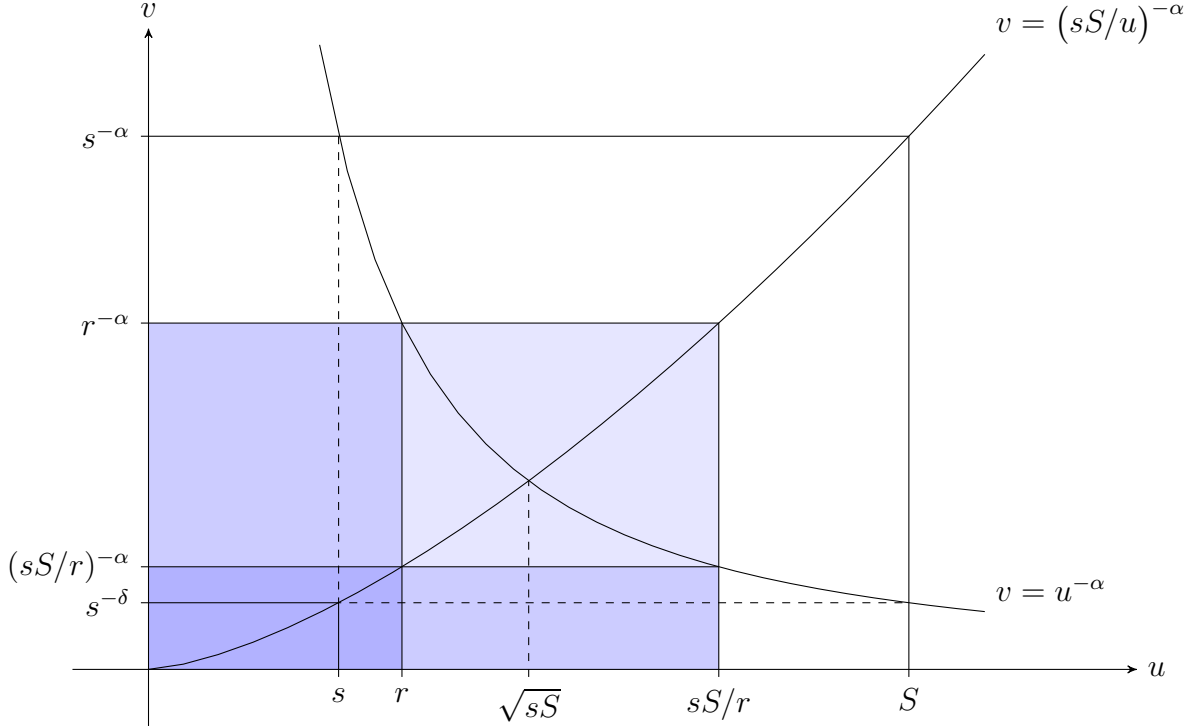

\begin{lemma}\label{l:there_is_a_good_leaf}
  Suppose $\alpha<\delta$.
  Let $\Sigma$ be an arbitrary discrete subset of $\R^d$ with the following two properties:

  \textup{(i)} every `leaf' in $\mathfrak L$ contains a point of $\Sigma$;

  \textup{(ii)} the root `node' $\cQ_s$ contains no points of $\Sigma$.

  Then there is a `leaf' containing two distinct points of $\Sigma$, one of which lies in the `node' that produces the `leaf'.
\end{lemma}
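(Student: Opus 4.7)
The plan is to follow the leaf $\cQ(e^\tau,\alpha)$ as $\tau$ grows from $a:=\log s$ to $A:=\log S$, building a chain of distinct $\Sigma$-points $\vec p_0,\vec p_1,\ldots$ whose leaf-lifetimes successively cover $[a,A]$, and to show that at some transition $\tau_i$ the leaf $\cQ(e^{\tau_i},\alpha)$ contains both $\vec p_i$ and $\vec p_{i+1}$ with at least one of the two already lying in the producing node of that leaf.

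Set $X(\vec p)=\log|(z_1,\ldots,z_m)|_{\pmb\sigma}(\vec p)$, $Y(\vec p)=\log|(z_{m+1},\ldots,z_d)|_{\pmb\rho}(\vec p)$, and $\tau_*=(a+A)/2$. Then $\vec p\in\cQ(e^\tau,\alpha)$ iff $X(\vec p)\leq\tau$ and $Y(\vec p)\leq-\alpha\tau$; the producing node of this leaf is carved out by the additional constraint $Y(\vec p)\leq-\alpha(a+A-\tau)$ when $\tau\leq\tau_*$, and by the tightening $X(\vec p)\leq a+A-\tau$ when $\tau\geq\tau_*$. I start with any $\vec p_0$ in the leaf at $\tau=a$; hypothesis (ii) rules out $Y(\vec p_0)\leq-\alpha A$, so $\tau_0:=-Y(\vec p_0)/\alpha<A$. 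Given $\vec p_i$ with $\tau_i:=-Y(\vec p_i)/\alpha<A$, hypothesis (i) together with the discreteness of $\Sigma$ lets me pick $\vec p_{i+1}$ lying in $\cQ(e^\tau,\alpha)$ for every $\tau$ in some right-neighbourhood $(\tau_i,\tau_i+\varepsilon)$. Passing to $\tau\searrow\tau_i$ gives $X(\vec p_{i+1})\leq\tau_i$, while membership at $\tau=\tau_i+\varepsilon/2$ yields the strict bound $Y(\vec p_{i+1})\leq-\alpha(\tau_i+\varepsilon/2)<-\alpha\tau_i$. The strict inequality makes $\vec p_{i+1}$ distinct from every previously chosen $\vec p_j$ (for which $Y(\vec p_j)=-\alpha\tau_j\geq-\alpha\tau_i$) and gives $\tau_{i+1}>\tau_i$.

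The iteration will be halted as soon as, at some step $i$, one of two conditions holds: (a) $\tau_i\geq\tau_*$, or (b) $\tau_i<\tau_*$ and $\tau_{i+1}\geq a+A-\tau_i$. In case (a), one has $X(\vec p_i)\leq a+A-\tau_i$: for $i=0$ this is immediate from $X(\vec p_0)\leq a<a+A-\tau_0$ (using $\tau_0<A$), while for $i\geq1$ continuation past step $i-1$ forces $\tau_i<a+A-\tau_{i-1}$, so $X(\vec p_i)\leq\tau_{i-1}<a+A-\tau_i$; combined with $Y(\vec p_i)=-\alpha\tau_i$, this places $\vec p_i$ in the producing node at $\tau_i$. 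In case (b), $X(\vec p_{i+1})\leq\tau_i$ and $Y(\vec p_{i+1})=-\alpha\tau_{i+1}\leq-\alpha(a+A-\tau_i)$ place $\vec p_{i+1}$ there instead. Either way the leaf at $\tau_i$ contains the distinct pair $\{\vec p_i,\vec p_{i+1}\}$ with one of them in the producing node, exactly as the lemma asserts.

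The main obstacle is to rule out the possibility that the iteration runs indefinitely. Nontermination forces $\tau_i<\tau_*$ at every step, whence $Y(\vec p_i)=-\alpha\tau_i>-\alpha\tau_*$, while $X(\vec p_i)\leq\tau_{i-1}<\tau_*$ (with $X(\vec p_0)\leq a<\tau_*$ as the base). Translating back to the original norms, $|(z_1,\ldots,z_m)|_{\pmb\sigma}(\vec p_i)\leq\sqrt{sS}$ and $|(z_{m+1},\ldots,z_d)|_{\pmb\rho}(\vec p_i)\leq s^{-\alpha}$. Since a bound on a weighted sup-norm controls each coordinate individually, all the $\vec p_i$ would lie in a bounded subset of $\R^d$, contradicting the discreteness of $\Sigma$. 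Hence the chain terminates, producing the desired leaf.
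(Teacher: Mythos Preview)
Your argument is correct in substance but takes a different route from the paper. The paper's proof is much shorter: it takes $r_0$ to be the smallest $r$ for which the node $\cQ_r$ contains a point $\vec v$ of $\Sigma$ (existence is immediate since $\cQ_{\sqrt{sS}}$ is itself a leaf), notes that by (ii) we have $r_0>s$, so $\vec v$ lies on the boundary of $\cQ_{r_0}$ and hence on the boundary of one of its two leaves $\cQ(r_1,\alpha)$, and then observes that a small perturbation of $r_1$ would push $\vec v$ out of the leaf without letting any new $\Sigma$-point in, contradicting (i) unless a second $\Sigma$-point was already present. Your iterative chain-building argument reaches the same conclusion with considerably more bookkeeping; what it buys is that it never appeals to the monotone nesting of the nodes (Proposition~\ref{prop:leaves_production}(i)), working instead entirely with the leaves and the single producing node at each step.

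Two small slips to fix. First, your definitions of $X$ and $Y$ are swapped relative to the leaf condition you write down: with $\cQ(r,\alpha)$ as in the paper one has $|(z_1,\ldots,z_m)|_{\pmb\sigma}\leq r^{-\alpha}$ and $|(z_{m+1},\ldots,z_d)|_{\pmb\rho}\leq r$, so for ``$X\leq\tau$, $Y\leq-\alpha\tau$'' to hold you want $X$ to be the logarithm of the $\pmb\rho$-norm and $Y$ the logarithm of the $\pmb\sigma$-norm; the rest of your proof (including the node description and the final translation back to norms) is internally consistent with this corrected assignment. Second, in the nontermination paragraph you display the \emph{lower} bound $Y(\vec p_i)>-\alpha\tau_*$, which does not bound the corresponding norm from above; what you actually need there is the upper bound $Y(\vec p_i)=-\alpha\tau_i\leq-\alpha a$, coming from $\tau_i\geq\tau_0\geq a$, and this is immediate from your construction.
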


\begin{proof}
  Denote by $r_0$ the smallest $r$ such that the `node' $\cQ_r$ contains a point $\vec v$ of $\Sigma$.
  The existence of $r_0$ follows from the fact that $\cQ_{\sqrt{sS}}\in\mathfrak L$.
  Then $r_0>s$ and $\vec v$ lies on the boundary of $\cQ_{r_0}$.
  Since, by Proposition \ref{prop:leaves_production}, this `node' coincides with the intersection of its `leaves', $\vec v$ lies on the boundary of one of them, say, $\cQ(r_1,\alpha)$. Since $r_1$ equals either $r_0$, or $sS/r_0$, we have
  \[s<r_0\leq r_1\leq sS/r_0<S.\]
  If there are no other points of $\Sigma$ in $\cQ(r_1,\alpha)$, let us perturb this `leaf' by adding a small $\varepsilon$ to $r_1$, so that $\vec v$ is no longer in $\cQ(r_1+\varepsilon,\alpha)$. Since `leaves' are compact and $\Sigma$ is discrete, for $\varepsilon$ small enough no other points of $\Sigma$ will enter $\cQ(r_1+\varepsilon,\alpha)$. This contradicts property \textup{(i)}, which proves that along with $\vec v$ there is another point of $\Sigma$ in $\cQ(r_1,\alpha)$, distinct from $\vec v$.
\end{proof}

\begin{lemma}\label{l:good_node_and_antinode}
  Within the hypothesis of Lemma \ref{l:there_is_a_good_leaf} there are two distinct points of $\Sigma$, such that one of them lies in a `node' $\cQ_r$ and the other one lies in the corresponding `anti-node' $\cQ_{sS/r}$.
\end{lemma}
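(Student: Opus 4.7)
The plan is short because this lemma is essentially a repackaging of Lemma \ref{l:there_is_a_good_leaf} together with the containment statement in Proposition \ref{prop:leaves_production}(ii). First, I would invoke Lemma \ref{l:there_is_a_good_leaf} to obtain a `leaf' $\cQ(r_1,\alpha)$ and two distinct points $\vec v,\vec w\in\Sigma\cap\cQ(r_1,\alpha)$, with $\vec v$ lying in the `node' $\cQ_{r'}$ that produces this `leaf'. By Proposition \ref{prop:leaves_production}(iii), one has $r'=r_1$ when $r_1\leq\sqrt{sS}$ and $r'=sS/r_1$ when $r_1\geq\sqrt{sS}$, so in either case $s\leq r'\leq\sqrt{sS}$. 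This already yields the required `node' membership $\vec v\in\cQ_{r'}$.

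It remains to check that $\vec w$ belongs to the corresponding `anti-node' $\cQ_{sS/r'}$. For this, I would apply Proposition \ref{prop:leaves_production}(ii), which asserts that the union $\cQ(r',\alpha)\cup\cQ(sS/r',\alpha)$ of the two `leaves' produced by the pair $(\cQ_{r'},\cQ_{sS/r'})$ is contained in the `anti-node' $\cQ_{sS/r'}$. Since the `leaf' $\cQ(r_1,\alpha)$ is one of these two `leaves', it lies inside $\cQ_{sS/r'}$, and therefore so does $\vec w$. Together with $\vec v\neq\vec w$, this gives two distinct points of $\Sigma$, one in the `node' $\cQ_{r'}$ and the other in the `anti-node' $\cQ_{sS/r'}$, as required.

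I do not anticipate a real obstacle: the combinatorial structure of `nodes', `anti-nodes' and `leaves' was arranged precisely so that every `leaf' sits inside its `anti-node'. The only point requiring mild care is the dichotomy in choosing $r'$ depending on whether $r_1$ falls in the `stem' range $[s,\sqrt{sS}]$ or the `anti-stem' range $[\sqrt{sS},S]$, but Proposition \ref{prop:leaves_production}(iii) handles this explicitly.
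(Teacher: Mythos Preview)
Your proposal is correct and follows essentially the same approach as the paper: the paper's proof simply says ``Consider the `leaf' provided by Lemma~\ref{l:there_is_a_good_leaf}. Then the `node' and the `anti-node' that produce this leaf satisfy the statement of the Lemma,'' and you have spelled out exactly the details behind this sentence via Proposition~\ref{prop:leaves_production}(ii)--(iii).
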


\begin{proof}
  Consider the `leaf' provided by Lemma \ref{l:there_is_a_good_leaf}. Then the `node' and the `anti-node' that produce this leaf satisfy the statement of the Lemma.
\end{proof}

Lemma \ref{l:good_node_and_antinode} is the key ingredient provided by the `stem'-and-`leaves' approach for proving Theorem \ref{t:weighted_German}. The only additional statement we need to formulate before we can proceed to the proof itself is the following technical Lemma.

\begin{lemma}\label{l:area_estimate}
  Suppose $\vec a,\vec b\in\R^2$ satisfy
  \[\vec a\in
    \Big\{ (z_1,z_2)\in\R^2 \,\Big|\, |z_1|\leq a,\ |z_2|\leq b \Big\},\quad
    \vec b\in
    \Big\{ (z_1,z_2)\in\R^2 \,\Big|\, |z_1|\leq A,\ |z_2|\leq B \Big\}.\]
  Then $|\vec a\wedge\vec b|\leq2\max(aB,bA)$.
\end{lemma}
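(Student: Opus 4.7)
The statement reduces to a direct estimate on the single coordinate of a wedge product in $\bigwedge^2\R^2$, so the plan is essentially an exercise in the triangle inequality and there is no serious obstacle. I would proceed as follows.

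First I would write $\vec a=(a_1,a_2)$ and $\vec b=(b_1,b_2)$ in coordinates. Since $\bigwedge^2\R^2$ is one-dimensional with basis $\vec e_1\wedge\vec e_2$, we have
\[
\vec a\wedge\vec b=(a_1b_2-a_2b_1)\,\vec e_1\wedge\vec e_2,
\qquad\text{so}\qquad
|\vec a\wedge\vec b|=|a_1b_2-a_2b_1|.
\]
The hypotheses give $|a_1|\leq a$, $|a_2|\leq b$, $|b_1|\leq A$, $|b_2|\leq B$.

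Next I would apply the ordinary triangle inequality together with these bounds:
\[
|a_1b_2-a_2b_1|\leq|a_1|\,|b_2|+|a_2|\,|b_1|\leq aB+bA.
\]
Finally I would observe that $aB+bA\leq 2\max(aB,bA)$, which is immediate since each of the two summands is bounded by the maximum. Combining these inequalities yields $|\vec a\wedge\vec b|\leq 2\max(aB,bA)$, as required.

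The only point to double-check is the normalisation of the norm on $\bigwedge^2\R^d$ implicit in Definition \ref{def:second_pseudo_compound}, namely that one measures a bivector by the sup-norm of its coordinates in the basis $\{\vec e_i\wedge\vec e_j\}_{i<j}$; this matches the expression $|a_1b_2-a_2b_1|$ used above. With that convention in place, the lemma follows in the few lines indicated, and no additional machinery is needed.
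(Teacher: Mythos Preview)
Your argument is correct and is exactly the kind of elementary computation the paper has in mind: the paper itself states ``The proof is elementary and we leave it to the reader'' and gives no further details. Your triangle-inequality estimate $|a_1b_2-a_2b_1|\leq aB+bA\leq 2\max(aB,bA)$ is the natural way to fill that gap.
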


The proof is elementary and we leave it to the reader.

\subsection{Proof of Theorem \ref{t:weighted_German}}

Let us keep on holding to the notation of Section \ref{sec:dual_lattices_and_parallelepipeds} and reformulate Definition \ref{def:uniform_weighted} the same way we reformulated Definition \ref{def:ordinary_weighted}.

\begin{proposition}\label{prop:uniform_omega_def_reformulation}
  \begin{equation*}
  \begin{aligned}
    \hat\omega_{\pmb\sigma,\pmb\rho}(\Theta) & =\sup\Big\{ \gamma\geq1 \,\Big|\, \text{for every $t$ large enough }\cP(t,\gamma)\cap\La\neq\{\vec 0\} \Big\}, \\
    \hat\omega_{\pmb\rho,\pmb\sigma}(\tr\Theta) & =\sup\Big\{ \alpha\geq1 \,\Big|\, \text{for every $r$ large enough }\cQ(r,\alpha)\cap\La^\ast\neq\{\vec 0\} \Big\}.
  \end{aligned}
  \end{equation*}
\end{proposition}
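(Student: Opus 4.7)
The plan is to translate Definition \ref{def:uniform_weighted} verbatim into the lattice-and-parallelepiped language introduced in Section \ref{sec:dual_lattices_and_parallelepipeds}. The argument is the direct uniform analogue of the one that yields Proposition \ref{prop:regular_omega_def_reformulation} (stated in the paper without proof), obtained simply by replacing the quantifier \emph{for some arbitrarily large} by \emph{for every large enough}.

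First I would multiply out the generating matrix of $\La$ against an arbitrary column $(\vec x,\vec y)^\top\in\Z^{m+n}$ to obtain
\[\La=\big\{(\vec x,\,\vec y-\Theta\vec x)\,:\,\vec x\in\Z^m,\ \vec y\in\Z^n\big\}.\]
From this description it is immediate that a nonzero $\vec z\in\cP(t,\gamma)\cap\La$ is the same datum as a nonzero integer pair $(\vec x,\vec y)$ satisfying
\[|\vec x|_{\pmb\sigma}\leq t,\qquad |\Theta\vec x-\vec y|_{\pmb\rho}\leq t^{-\gamma},\]
since the weighted norm is symmetric under sign change. The condition under the supremum in the proposition is therefore the condition of Definition \ref{def:uniform_weighted}, except for the restriction $\gamma\geq 1$. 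However, Minkowski's convex body theorem, already recalled in the introduction in the form $\hat\omega_{\pmb\sigma,\pmb\rho}(\Theta)\geq 1$, guarantees that the restriction cuts nothing off: every $\gamma\leq 1$ satisfying the defining condition lies below elements of the set. Hence the two suprema agree.

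For the second identity, the same computation applied to the generating matrix of $\La^\ast$ gives
\[\La^\ast=\big\{(\vec x+\tr\Theta\vec y,\,\vec y)\,:\,\vec x\in\Z^m,\ \vec y\in\Z^n\big\},\]
so a nonzero element of $\cQ(r,\alpha)\cap\La^\ast$ corresponds exactly to a nonzero $(\vec x,\vec y)\in\Z^{m+n}$ satisfying $|\vec y|_{\pmb\rho}\leq r$ and $|\tr\Theta\vec y-\vec x|_{\pmb\sigma}\leq r^{-\alpha}$. This is precisely the system \eqref{eq:system_with_weights} with $(m,n,\pmb\sigma,\pmb\rho,\Theta)$ replaced by $(n,m,\pmb\rho,\pmb\sigma,\tr\Theta)$. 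Invoking the Minkowski lower bound $\hat\omega_{\pmb\rho,\pmb\sigma}(\tr\Theta)\geq 1$ once more to absorb the $\alpha\geq 1$ restriction, Definition \ref{def:uniform_weighted} applied to $\tr\Theta$ identifies the supremum with $\hat\omega_{\pmb\rho,\pmb\sigma}(\tr\Theta)$.

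There is no genuine obstacle in this proof: the proposition is a purely notational reformulation, the heart of which is the elementary computation of $\La$ and $\La^\ast$ as sets of integer linear combinations. The only non-definitional input is the harmless trivial bound $\hat\omega\geq 1$ used to justify the lower limit of the supremum, and this has already been flagged in the text preceding Corollary \ref{cor:weighted_Dyson_inversed}.
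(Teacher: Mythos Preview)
Your argument is correct and is precisely the routine verification the paper has in mind; indeed, the paper does not prove this proposition at all, presenting it as an immediate reformulation of Definition~\ref{def:uniform_weighted} in the same spirit as Proposition~\ref{prop:regular_omega_def_reformulation}. Your explicit description of $\La$ and $\La^\ast$ and the appeal to the trivial bound $\hat\omega\geq1$ to justify the restriction $\gamma\geq1$ supply exactly the details one would expect.
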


For every $\alpha\geq1$ set
\begin{equation}\label{eq:gamma_via_alpha}
  \gamma=
  \begin{cases}
    \dfrac{1-\rho_n\alpha^{-1}}
          {1-\rho_n}        \hskip 7mm\text{ if }\alpha\geq\rho_n/\sigma_m \\
    \hskip 2mm
    \dfrac{1-\sigma_m\vphantom{1^{\textstyle|}}}
          {1-\sigma_m\alpha}\hskip 8.2mm\text{ if }\alpha\leq\rho_n/\sigma_m
  \end{cases}.
\end{equation}
Then
\[1\leq\gamma<(1-\rho_n)^{-1}.\]
As in Section \ref{sec:twisted_Dyson_proof_itself}, for every $t>1$ set
\begin{equation}\label{eq:s_and_delta_via_t_and_gamma_uniform_case}
  s=t^{\gamma-(\gamma-1)\rho_n^{-1}},\qquad
  \delta=\frac{1+(\gamma-1)\sigma_m^{-1}}{\gamma-(\gamma-1)\rho_n^{-1}}.
\end{equation}
It is a simple exercise to show that with this choice of parameters we have
\[
  \alpha=1\iff\gamma=1\iff\delta=1
  \qquad\text{and}\qquad
  \alpha<\delta\iff\alpha>1.\]
We will prove Theorem \ref{t:weighted_German} by showing that
\begin{equation*}
  \hat\omega_{\pmb\rho,\pmb\sigma}(\tr\Theta)\geq\alpha
  \implies
  \hat\omega_{\pmb\sigma,\pmb\rho}(\Theta)\geq\gamma.
\end{equation*}
If $\hat\omega_{\pmb\rho,\pmb\sigma}(\tr\Theta)=1$, we are to take $\alpha=\gamma=1$, which makes both inequalities trivial. So we may assume hereafter that
\begin{equation}\label{eq:agreement}
  \hat\omega_{\pmb\rho,\pmb\sigma}(\tr\Theta)>1.
\end{equation}
Let S, $\mathfrak S$, $\mathfrak A$, $\mathfrak L$ be as in Section \ref{sec:nodes_and_leaves}.

\begin{lemma}\label{l:main}
  Suppose $\alpha>1$. Suppose that

  \textup{(i)} every `leaf' in $\mathfrak L$ contains a nonzero point of $\La^\ast$;

  \textup{(ii)} the root `node' $\cQ_s=\cQ(s,\delta)$ contains no nonzero points of $\La^\ast$.

  Then
  \[(2\cP(t,\gamma)^{\circledast})\cap\La^{\circledast}\neq\{\vec 0\}.\]
\end{lemma}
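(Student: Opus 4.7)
The plan is to apply Lemma~\ref{l:good_node_and_antinode} to the discrete set $\Sigma=\La^\ast\setminus\{\vec 0\}$ and take the exterior product of the two lattice points thus obtained, producing the desired element of $\La^\circledast\cap 2\cP(t,\gamma)^\circledast$.

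First I would verify, from \eqref{eq:gamma_via_alpha} and \eqref{eq:s_and_delta_via_t_and_gamma_uniform_case}, that $\alpha>1$ forces $\alpha<\delta$; this is the short algebraic check already announced just before the lemma. Combined with hypotheses (i)--(ii), this puts us in the setting of Lemma~\ref{l:good_node_and_antinode} and produces distinct nonzero vectors $\vec v_1\in\cQ_r\cap\La^\ast$ and $\vec v_2\in\cQ_{sS/r}\cap\La^\ast$ for some $r\in[s,\sqrt{sS}]$. A small technical point is ruling out $\vec v_1\wedge\vec v_2=\vec 0$: should the two vectors happen to be collinear, they lie on a common rational line through the origin, and one may replace one of them by a lattice point in a neighbouring `leaf' not proportional to the other, without worsening the coordinate bounds that follow.

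Next, from $\vec v_1\in\cQ_r$ and $\vec v_2\in\cQ_{sS/r}$ one reads off
\[
  |v_{1,j}|\leq (sS/r)^{-\alpha\sigma_j},\quad
  |v_{1,m+i}|\leq r^{\rho_i},\quad
  |v_{2,j}|\leq r^{-\alpha\sigma_j},\quad
  |v_{2,m+i}|\leq (sS/r)^{\rho_i}
\]
for $1\leq j\leq m$ and $1\leq i\leq n$, and feeds them pairwise into Lemma~\ref{l:area_estimate}. This bounds each component $(\vec v_1\wedge\vec v_2)_{ij}$ by twice the maximum of two monomials in $r$ and $sS/r$.

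The substantive part is then the comparison against the definition of $2\cP(t,\gamma)^\circledast$: setting $\lambda_j=t^{\sigma_j}$ for $j\leq m$ and $\lambda_{m+i}=t^{-\gamma\rho_i}$ for $i\leq n$, one has $\prod_k\lambda_k=t^{1-\gamma}$, so the target inclusion reads
\[
  |(\vec v_1\wedge\vec v_2)_{ij}|\leq 2(\lambda_i\lambda_j)^{-1}t^{1-\gamma}
  \qquad\text{for every }i<j.
\]
Since both sides are log-linear in $\log r$ on the interval $[\log s,\tfrac12\log(sS)]$, the worst case occurs at $r=s$ or $r=\sqrt{sS}$, and with $s=t^\beta$, $S=t^{\beta\delta/\alpha}$ the verification reduces to comparisons of exponents of $t$. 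The main obstacle is simply the case analysis: three cases according to whether both indices lie in $\{1,\ldots,m\}$, both in $\{m+1,\ldots,d\}$, or one in each block, further split by the threshold $\alpha\geq\rho_n/\sigma_m$ versus $\alpha\leq\rho_n/\sigma_m$. The two branches of \eqref{eq:gamma_via_alpha} are calibrated precisely so that the tightest instance, corresponding to the mixed pair with indices $(m,m+n)$, becomes an equality, while all other pairs satisfy the bound with room to spare.
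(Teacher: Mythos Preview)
Your plan is the paper's: apply Lemma~\ref{l:good_node_and_antinode} to $\Sigma=\La^\ast\setminus\{\vec 0\}$, bound each Pl\"ucker coordinate of $\vec v_1\wedge\vec v_2$ via Lemma~\ref{l:area_estimate}, and check the three blocks of index pairs against the definition of $2\cP(t,\gamma)^\circledast$. Two points of execution are worth flagging.

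First, the case split on $\alpha\gtrless\rho_n/\sigma_m$ is unnecessary. The paper records once and for all that \emph{both} inequalities
\[
  \gamma(1-\rho_n)\leq 1-\rho_n\alpha^{-1},
  \qquad
  \gamma(1-\sigma_m\alpha)\leq 1-\sigma_m
\]
hold for every $\alpha\geq1$ (each branch of \eqref{eq:gamma_via_alpha} turns one of them into an equality, and the other then follows from $(\alpha-1)(\sigma_m-\rho_n/\alpha)\geq0$ on the relevant range). With these two inequalities in hand, after passing to the endpoints $r=s$ and $r=S$ exactly as you suggest, every block of coordinates is handled by the same short computation, and no separate treatment of the two regimes is needed.

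Second, your collinearity concern is legitimate---the paper is silent on it---but the remedy you sketch (``replace one of them by a point in a neighbouring leaf'') is not justified as stated. A cleaner argument stays inside Lemma~\ref{l:there_is_a_good_leaf}: the point $\vec v$ constructed there is primitive (otherwise the primitive vector on $\R\vec v$ would lie in a strictly smaller `node', contradicting the minimality of $r_0$), so the only lattice points on $\R\vec v$ inside the chosen leaf are $\pm\vec v$; the perturbation of $r_1$ removes both simultaneously by symmetry, and the surviving $\Sigma$-point in the leaf is therefore linearly independent of $\vec v$.
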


\begin{proof}
  By Definition \ref{def:second_pseudo_compound}
  \begin{equation*}
    \cP(t,\gamma)^\circledast=\left\{\,\vec Z 
                                       \in{\textstyle\bigwedge^2\R^d} \ \middle|
                                       \begin{array}{lr}
                                         |Z_{ij}|\leq t^{-\sigma_i-\sigma_j+1-\gamma}, & 1\leq i<j\leq m \\
                                         |Z_{m+i\,m+j}|\leq t^{\gamma\rho_i+\gamma\rho_j+1-\gamma}, & 1\leq i<j\leq n \\
                                         |Z_{j\,m+i}|\leq t^{-\sigma_j+\gamma\rho_i+1-\gamma}, & 1\leq j\leq m,\ 1\leq i\leq n
                                       \end{array} \right\}
  \end{equation*}
  with no first or second line of inequalities if $m=1$ or $n=1$ respectively.

  Since $\alpha>1$, we have $\alpha<\delta$. Let us apply Lemma \ref{l:good_node_and_antinode} with $\Sigma=\La^\ast\backslash\{\vec 0\}$. Then there are two distinct nonzero points $\vec v_1,\vec v_2\in\La^\ast$ and an $r\in\R$, $s<r<S$, such that
  \begin{equation*}
  \begin{aligned}
    \vec v_1\in\cQ_r & =
    \Bigg\{\,\vec z\in\R^d \ \Bigg|
             \begin{array}{lr}
               |z_j|\leq (sS/r)^{-\alpha\sigma_j}, & j=1,\ldots,m \\
               |z_{m+i}|\leq r^{\rho_i}, & i=1,\ldots,n
             \end{array} \Bigg\}, \\
    \vec v_2\in\cQ_{sS/r} & =
    \Bigg\{\,\vec z\in\R^d \ \Bigg|
             \begin{array}{lr}
               |z_j|\leq r^{-\alpha\sigma_j}, & j=1,\ldots,m \\
               |z_{m+i}|\leq (sS/r)^{\rho_i}, & i=1,\ldots,n
             \end{array} \Bigg\}.
  \end{aligned}
  \end{equation*}
  Let us show that
  \begin{equation*}
    \vec v_1\wedge\vec v_2\in2\cP(t,\gamma)^{\circledast}.
  \end{equation*}
  This will prove the Lemma.

  We are to show that the coefficients in the representation
  \[\vec v_1\wedge\vec v_2=\sum_{1\leq i<j\leq d}V_{ij}\vec e_i\wedge\vec e_j\]
  satisfy
  \begin{align}
    & |V_{ij}|\cdot t^{\sigma_i+\sigma_j-1+\gamma}\leq2, & 1\leq i<j\leq m, \label{eq:sigma_sigma} \\
    & |V_{m+i\,m+j}|\cdot t^{-\gamma\rho_i-\gamma\rho_j-1+\gamma}\leq2, & 1\leq i<j\leq n, \label{eq:rho_rho} \\
    & |V_{j\,m+i}|\cdot t^{\sigma_j-\gamma\rho_i-1+\gamma}\leq2, & 1\leq j\leq m,\ 1\leq i\leq n. \label{eq:sigma_rho}
  \end{align}
  We shall make use of the inequalities
  \begin{equation}\label{eq:negative}
  \begin{aligned}
    \gamma(1-\rho_n)-(1-\rho_n\alpha^{-1})\leq0, \\
    \gamma(1-\sigma_m\alpha)-(1-\sigma_m)\leq0,
  \end{aligned}
  \end{equation}
  that, as follows from \eqref{eq:gamma_via_alpha}, hold for every $\alpha\geq1$.

  \paragraph{Checking \eqref{eq:sigma_sigma}}

  By Lemma \ref{l:area_estimate} for $V_{ij}$ with $1\leq i<j\leq m$ we have
  \[
  \begin{aligned}
    |V_{ij}| & \leq
    2\max(r^{-\alpha\sigma_i}(sS/r)^{-\alpha\sigma_j},r^{-\alpha\sigma_j}(sS/r)^{-\alpha\sigma_i})\leq \\ & \leq
    2\max(s^{-\alpha\sigma_i}S^{-\alpha\sigma_j},s^{-\alpha\sigma_j}S^{-\alpha\sigma_i})= 
    2\max(s^{-\alpha\sigma_i-\delta\sigma_j},s^{-\alpha\sigma_j-\delta\sigma_i}).
  \end{aligned}
  \]
  It follows from \eqref{eq:s_and_delta_via_t_and_gamma_uniform_case} and \eqref{eq:negative} that
  \[
  \begin{aligned}
    s^{-\alpha\sigma_i-\delta\sigma_j}t^{\sigma_i+\sigma_j-1+\gamma} & =
    t^{-\left(\gamma-(\gamma-1)\rho_n^{-1}\right)\alpha\sigma_i-\left(1+(\gamma-1)\sigma_m^{-1}\right)\sigma_j}
    t^{\sigma_i+\sigma_j-1+\gamma}= \\ & =
    t^{\sigma_i-\left(\gamma-(\gamma-1)\rho_n^{-1}\right)\alpha\sigma_i+(\gamma-1)\left(1-\sigma_m^{-1}\sigma_j\right)}\leq \\ & \leq
    t^{\sigma_i-\left(\gamma-(\gamma-1)\rho_n^{-1}\right)\alpha\sigma_i}=
    t^{\alpha\sigma_i\rho_n^{-1}\left(\gamma(1-\rho_n)-(1-\rho_n\alpha^{-1})\right)}\leq1.
  \end{aligned}
  \]
  Similarly, interchanging $i$ and $j$, we get $s^{-\alpha\sigma_j-\delta\sigma_i}t^{\sigma_i+\sigma_j-1+\gamma}\leq1$.
  Thus, \eqref{eq:sigma_sigma} is fulfilled.

  \paragraph{Checking \eqref{eq:rho_rho}}

  By Lemma \ref{l:area_estimate} for $V_{m+i\,m+j}$ with $1\leq i<j\leq n$ we have
  \[
  \begin{aligned}
    |V_{m+i\,m+j}| & \leq
    2\max(r^{\rho_i}(sS/r)^{\rho_j},r^{\rho_j}(sS/r)^{\rho_i})\leq \\ & \leq
    2\max(s^{\rho_i}S^{\rho_j},s^{\rho_j}S^{\rho_i})= 
    2\max(s^{\rho_i+(\delta/\alpha)\rho_j},s^{\rho_j+(\delta/\alpha)\rho_i}).
  \end{aligned}
  \]
  It follows from \eqref{eq:s_and_delta_via_t_and_gamma_uniform_case} and \eqref{eq:negative} that
  \[
  \begin{aligned}
    s^{\rho_i+(\delta/\alpha)\rho_j}t^{-\gamma\rho_i-\gamma\rho_j-1+\gamma} & =
    t^{\left(\gamma-(\gamma-1)\rho_n^{-1}\right)\rho_i+\left(1+(\gamma-1)\sigma_m^{-1}\right)\alpha^{-1}\rho_j}
    t^{-\gamma\rho_i-\gamma\rho_j-1+\gamma}= \\ & =
    t^{-\gamma\rho_i+\left(1+(\gamma-1)\sigma_m^{-1}\right)\alpha^{-1}\rho_i+(\gamma-1)\left(1-\rho_n^{-1}\rho_j\right)}\leq \\ & \leq
    t^{-\gamma\rho_i+\left(1+(\gamma-1)\sigma_m^{-1}\right)\alpha^{-1}\rho_i}=
    t^{\alpha^{-1}\sigma_m^{-1}\rho_i\left(\gamma(1-\sigma_m\alpha)-(1-\sigma_m)\vphantom{1^{1}}\right)}\leq1.
  \end{aligned}
  \]
  Similarly, interchanging $i$ and $j$, we get $s^{\rho_j+(\delta/\alpha)\rho_i}t^{-\gamma\rho_i-\gamma\rho_j-1+\gamma}\leq1$. Thus, \eqref{eq:rho_rho} is fulfilled.

  \paragraph{Checking \eqref{eq:sigma_rho}}

  By Lemma \ref{l:area_estimate} for $V_{j\,m+i}$ with $1\leq j\leq m,\ 1\leq i\leq n$ we have
  \[
  \begin{aligned}
    |V_{j\,m+i}| & \leq
    2\max(r^{-\alpha\sigma_j+\rho_i},(sS/r)^{-\alpha\sigma_j+\rho_i})\leq \\ & \leq
    2\max(s^{-\alpha\sigma_j+\rho_i},S^{-\alpha\sigma_j+\rho_i})= 
    2\max(s^{-\alpha\sigma_j+\rho_i},s^{-\delta\sigma_j+(\delta/\alpha)\rho_i}).
  \end{aligned}
  \]
  It follows from \eqref{eq:s_and_delta_via_t_and_gamma_uniform_case} and \eqref{eq:negative} that
  \[
  \begin{aligned}
    s^{-\alpha\sigma_j+\rho_i}t^{\sigma_j-\gamma\rho_i-1+\gamma} & =
    t^{\left(\gamma-(\gamma-1)\rho_n^{-1}\right)\left(-\alpha\sigma_j+\rho_i\vphantom{1^{1}}\right)}t^{\sigma_j-\gamma\rho_i-1+\gamma}= \\ & =
    t^{\sigma_j-\left(\gamma-(\gamma-1)\rho_n^{-1}\right)\alpha\sigma_j+(\gamma-1)\left(1-\rho_n^{-1}\rho_i\right)}\leq \\ & \leq
    t^{\sigma_j-\left(\gamma-(\gamma-1)\rho_n^{-1}\right)\alpha\sigma_j}=
    t^{\alpha\sigma_j\rho_n^{-1}\left(\gamma(1-\rho_n)-(1-\rho_n\alpha^{-1})\right)}\leq1
  \end{aligned}
  \]
and
  \[
  \begin{aligned}
    s^{-\delta\sigma_j+(\delta/\alpha)\rho_i}t^{\sigma_j-\gamma\rho_i-1+\gamma} & =
    t^{\left(1+(\gamma-1)\sigma_m^{-1}\right)\left(-\sigma_j+\alpha^{-1}\rho_i\right)}t^{\sigma_j-\gamma\rho_i-1+\gamma}= \\ & =
    t^{-\gamma\rho_i+\left(1+(\gamma-1)\sigma_m^{-1}\right)\alpha^{-1}\rho_i+(\gamma-1)\left(1-\sigma_m^{-1}\sigma_j\right)}\leq \\ & \leq
    t^{-\gamma\rho_i+\left(1+(\gamma-1)\sigma_m^{-1}\right)\alpha^{-1}\rho_i}=
    t^{\alpha^{-1}\sigma_m^{-1}\rho_i\left(\gamma(1-\sigma_m\alpha)-(1-\sigma_m)\vphantom{1^{1}}\right)}\leq1.
  \end{aligned}
  \]
  Thus, \eqref{eq:sigma_rho} is also fulfilled.

  Hence $\vec v_1\wedge\vec v_2\in2\cP(t,\gamma)^{\circledast}$, which proves the Lemma.
\end{proof}

Having Lemma \ref{l:main} and Theorem \ref{t:second_pseudo_compound}, we can prove Theorem \ref{t:weighted_German} in the blink of an eye.

As we showed in Section \ref{sec:twisted_Dyson_proof_itself},
\begin{equation}\label{eq:if_Q_is_nonempty_then_so_is_P_uniform_case}
  \cQ(s,\delta)\cap\La^\ast\neq\{\vec 0\}
  \implies
  c\cP(t,\gamma)\cap\La\neq\{\vec 0\}.
\end{equation}
This observation, Lemma \ref{l:main}, and Theorem \ref{t:second_pseudo_compound} give us the key relation
\begin{equation}\label{eq:if_all_the_Q_are_nonempty_then_so_is_P}
\begin{aligned}
  \cQ(r,\alpha)\cap\La^\ast\neq\{\vec 0\}\text{ for every } & r\in[s,S]
  \implies \\ & \implies
  c''\cP(t,\gamma)\cap\La\neq\{\vec 0\},
  \vphantom{1^{|^{|}}}
\end{aligned}
\end{equation}
where $c''=\big(2d(d-1)\big)^{\raisebox{1ex}{$\frac1{2(d-2)}$}}$. Indeed, if $\cQ(s,\delta)$ contains a nonzero point of $\La^\ast$, then we are done by \eqref{eq:if_Q_is_nonempty_then_so_is_P_uniform_case}, since $c<c''$. Otherwise, the hypothesis of Lemma \ref{l:main} is satisfied, whence $(2\cP(t,\gamma)^{\circledast})\cap\La^{\circledast}\neq\{\vec 0\}$. Since $2\cP(t,\gamma)^{\circledast}=(c'''\cP(t,\gamma))^{\circledast}$ with $c'''=2^{1/(d-2)}$, Theorem \ref{t:second_pseudo_compound} gives $c''\cP(t,\gamma)\cap\La\neq\{\vec 0\}$. This proves \eqref{eq:if_all_the_Q_are_nonempty_then_so_is_P}.

Assuming \eqref{eq:agreement}, let us choose an arbitrary $\alpha$ such that $1<\alpha<\hat\omega_{\pmb\rho,\pmb\sigma}(\tr\Theta)$. Then for every $s$ large enough $\cQ(s,\alpha)\cap\La^\ast\neq\{\vec 0\}$. As we already noticed in Section \ref{sec:twisted_Dyson_proof_itself}, $s$ and $t$ tend to $+\infty$ simultaneously. Thus, in view of Proposition \ref{prop:uniform_omega_def_reformulation} relation \eqref{eq:if_all_the_Q_are_nonempty_then_so_is_P} implies that
\begin{equation*}
  \hat\omega_{\pmb\rho,\pmb\sigma}(\tr\Theta)>\alpha
  \implies
  \hat\omega_{\pmb\sigma,\pmb\rho}(\Theta)\geq\gamma.
\end{equation*}
Since $\gamma$ continuously depends on $\alpha$, we get for every $\alpha\geq1$
\begin{equation*}
  \hat\omega_{\pmb\rho,\pmb\sigma}(\tr\Theta)\geq\alpha
  \implies
  \hat\omega_{\pmb\sigma,\pmb\rho}(\Theta)\geq\gamma.
\end{equation*}
Hence
\[\hat\omega_{\pmb\sigma,\pmb\rho}(\Theta)\geq
  \begin{cases}
    \dfrac{1-\rho_n\hat\omega_{\pmb\rho,\pmb\sigma}(\tr\Theta)^{-1}}
          {1-\rho_n}        \hskip 7mm\text{ if }\hat\omega_{\pmb\rho,\pmb\sigma}(\tr\Theta)\geq\rho_n/\sigma_m \\
    \hskip 2mm
    \dfrac{1-\sigma_m\vphantom{1^{\textstyle|}}}
          {1-\sigma_m\hat\omega_{\pmb\rho,\pmb\sigma}(\tr\Theta)}\hskip 8.2mm\text{ if }\hat\omega_{\pmb\rho,\pmb\sigma}(\tr\Theta)\leq\rho_n/\sigma_m
  \end{cases}.\]
Swapping $(\pmb\sigma,\pmb\rho,\Theta)$ for $(\pmb\rho,\pmb\sigma,\tr\Theta)$ gives \eqref{eq:weighted_German}. Theorem \ref{t:weighted_German} is proved.

\section{Variety of inequalities generalising Dyson's theorem}\label{sec:variety}

As we noticed in Section \ref{sec:Mahler_by_German}, all the Dyson-like transference theorems actually base upon Theorem \ref{t:Mahler_by_German}. In the weighted setting, with the notation of Sections \ref{sec:Mahler_by_German}, \ref{sec:dual_lattices_and_parallelepipeds}, we can describe the scheme of a possible proof rather generally by the following diagram:
\begin{equation}\label{eq:Dyson_diagram}
  \cQ(s,\delta)
  \subseteq
  \cP^\ast
  \dashrightarrow
  \cP
  \subseteq
  \cP(t,\gamma).
\end{equation}
This diagram means that we find an appropriate parallelepiped $\cP$, to which we can apply Theorem \ref{t:Mahler_by_German}, and then choose $t$, $\gamma$, $s$, $\delta$ providing the inclusions in \eqref{eq:Dyson_diagram}. Given such $\cP$, $t$, $\gamma$, $s$, $\delta$, we can claim that, if there is a nonzero point of $\La^\ast$ in $\cQ(s,\delta)$, then there is a nonzero point of $\La$ in $c\cP(t,\gamma)$. In our proof of Theorem \ref{t:weighted_Dyson} (see Section \ref{sec:Dyson_proof}) we chose $\cP=\cP(t,\gamma)$. However, generally one can try and choose another $\cP$.

Let us consider arbitrary $t,\gamma,s,\delta\in\R$ such that
\[t>1,\quad\gamma\geq1,\quad s>1,\quad1\leq\gamma<(1-\rho_n)^{-1},\]
and arbitrary positive $\lambda_1,\ldots,\lambda_d$ determining $\cP$ by
\[\cP=\Big\{ \vec z=(z_1,\ldots,z_d)\in\R^d \,\Big|\, |z_i|\leq\lambda_i,\ i=1,\ldots,d \Big\}.\]
Let us define $a$, $b$, $\mu_1,\ldots,\mu_m$, $\nu_1,\ldots,\nu_n$, and $\Delta$ by
\[
\begin{aligned}
  & s=t^a,\quad \delta=b/a, \\
  & \lambda_j=t^{\mu_j},\quad
    \lambda_{m+i}=t^{\nu_i},\quad
    j=1,\ldots,m,\ i=1,\ldots,n, \\
  & \Delta=\sum_{1\leq j\leq m}\mu_j+\sum_{1\leq j\leq n}\nu_i.
\end{aligned}
\]
Consider the diagonal matrix $T=\textup{diag}(\lambda_1,\ldots,\lambda_d)$. Then $T^{-1}\cP=\cB$ and $T\cP^\ast=t^\Delta\cB$, where
\[\cB=\Big\{ \vec z=(z_1,\ldots,z_d)\in\R^d \,\Big|\, |z_i|\leq1,\ i=1,\ldots,d \Big\}.\]
Hence the inclusions in \eqref{eq:Dyson_diagram} are equivalent to $t^{-\Delta}T\cQ(s,\delta)\subseteq\cB\subseteq T^{-1}\cP(t,\gamma)$. Or, more explicitly,
\begin{equation*}
  \begin{array}{rrl}
    t^{-\Delta+\mu_j-b\sigma_j}\leq1,\quad &
    t^{-\mu_j+\sigma_j}\geq1,\quad &
    j=1,\ldots,m, \\
    t^{-\Delta+\nu_i+a\rho_i}\leq1,\quad &
    t^{-\nu_i-\gamma\rho_i}\geq1,\quad &
    \,i=1,\ldots,n.
  \end{array}
\end{equation*}
Thus, the inclusions in \eqref{eq:Dyson_diagram} take place if and only if for every $i$ and $j$ we have
\begin{align}
  \mu_j & \leq\sigma_j, & \nu_i & \leq-\gamma\rho_i, \label{eq:mu_nu} \\
  b & \geq(\mu_j-\Delta)/\sigma_j, & a & \leq(\Delta-\nu_i)/\rho_i. \label{eq:b_a}
\end{align}
We are interested in $\delta=b/a$ to be as small as possible, so for every $\mu_1,\ldots,\mu_m$, $\nu_1,\ldots,\nu_n$ it is best to set
\[b=\max_{1\leq j\leq m}\frac{\mu_j-\Delta}{\sigma_j}\,,\qquad
  a=\min_{1\leq i\leq n}\frac{\Delta-\nu_i}{\rho_i}\,.\]
Furthermore, notice that under the condition \eqref{eq:mu_nu} we have $\Delta-\mu_j\leq-\sigma_j+1-\gamma$ and $\Delta-\nu_i\leq\gamma\rho_i+1-\gamma$, since $\sigma_1+\ldots+\sigma_m=\rho_1+\ldots+\rho_n=1$. Hence
\[b=\max_{1\leq j\leq m}\frac{\mu_j-\Delta}{\sigma_j}\geq
  \max_{1\leq j\leq m}\big(1+(\gamma-1)\sigma_j^{-1}\big)=
  1+(\gamma-1)\sigma_m^{-1},\]
\[a=\min_{1\leq i\leq n}\frac{\Delta-\nu_i}{\rho_i}\leq
  \min_{1\leq i\leq n}\big(\gamma-(\gamma-1)\rho_i^{-1}\big)=
  \gamma-(\gamma-1)\rho_n^{-1},\]
with the equalities attained if for every $i$ and $j$ we have $\mu_j=\sigma_j$ and $\nu_i=-\gamma\rho_i$, i.e. if $\cP=\cP(t,\gamma)$. These values of $b$ and $a$ provide us with $\delta$ we used in Section \ref{sec:twisted_Dyson_proof_itself}.

However, if the choice of $\mu_1,\ldots,\mu_m$, $\nu_1,\ldots,\nu_n$ is not optimal, for instance, if at least one of the $m+n$ inequalities in \eqref{eq:mu_nu} is strict, then either $b$ is bounded away from $1+(\gamma-1)\sigma_m^{-1}$, or $a$ is bounded away from $\gamma-(\gamma-1)\rho_n^{-1}$. Thus, if $\cP$ is chosen as a proper subset of $\cP(t,\gamma)$, then $\delta$ will be greater than $\big(1+(\gamma-1)\sigma_m^{-1}\big)\big/\big(\gamma-(\gamma-1)\rho_n^{-1}\big)$. This is the reason the generalisation of Dyson's inequality obtained in \cite{ghosh_marnat_Pisa_2019} happens to be weaker than the one provided by Theorem \ref{t:weighted_Dyson}.

We leave it to the reader to prove that the weakest possible inequality that can be obtained in such a way corresponds to $\cP$ chosen so that $\cP^\ast=\cQ(s,\delta)$.

We end up with a remark that for the `non-twisted' case no problem of this kind arises, as in that case all the inequalities in \eqref{eq:mu_nu}, \eqref{eq:b_a} can be turned into equalities, providing thus a very nice relation $\cQ(s,\delta)=\cP(t,\gamma)^\ast$.

\paragraph{Acknowledgements.}

The author is a Young Russian Mathematics award winner and would like to thank its sponsors and jury.

\end{document}